\newtheorem{theorem}{Theorem}[section]
\newtheorem{lemma}[theorem]{Lemma}
\newtheorem{cor}[theorem]{Corollary}
\newtheorem{corollary}[theorem]{Corollary}
\newtheorem{prop}[theorem]{Proposition}
\newtheorem{question}[theorem]{Question}
\theoremstyle{definition}
\theoremstyle{remark}
\newtheorem{remark}[theorem]{Remark}
\newtheorem{remarks}[theorem]{Remarks}
\newtheorem{convention}[theorem]{Convention}
\numberwithin{equation}{section}
\newcommand{\Q}{\mathbb{Q}}
\newcommand{\C}{\mathbb{C}}
\newcommand{\R}{\mathbb{R}}
\newcommand{\ev}{\mathrm{ev}}
\newcommand{\odd}{\mathrm{odd}}
\def\proofend{\hbox to 1em{\hss}\hfill $\blacksquare $\bigskip }
\def\Z{{\mathbb Z}}
\def\Zp #1{{\mathbb Z }/#1{\mathbb Z}}
\def\gammaneu{\kappa}
\DeclareMathOperator{\rank}{rk}
\title{Complete intersections with $S^1$-action}
\author{Anand Dessai}
\address{D\'epartement de Math\'ematiques, Universit\'e de Fribourg,  Chemin du Mus\'ee 23, 1700 Fribourg, Switzerland}
\email{anand.dessai@unifr.ch}
\thanks{}
\author{Michael Wiemeler}
\address{Institut f\"ur Mathematik, Universit\"at Augsburg, 86135 Augsburg, Germany}
\email{michael.wiemeler@math.uni-augsburg.de}
\thanks{The first author was supported by SNF Grants No. 200020-126795 and 200020-138147. The second author was supported by SNF Grant No. PBFRP2-133466 and DFG Grant HA 3160/6-1.}
\subjclass[2000]{Primary 14M10, 57S15}
\keywords{Complete intersections, circle actions, classification}
\begin{document}
\begin{abstract}
{We give the diffeomorphism classification of complete intersections
with $S^1$-symmetry in dimension $\leq 6$. In particular, we show
that a $6$-dimensional complete intersection admits a smooth
non-trivial $S^1$-action if and only if it is diffeomorphic to the
complex projective space or the quadric. We also prove that in any odd complex dimension only finitely many complete intersections can carry a smooth effective action by a torus of rank $>1$.}
\end{abstract}

\maketitle

\noindent
\section{Introduction}
The use of algebraic and differential topology in the study of compact smooth transformation groups has a long history and many methods have been developed over the years. Often the strength/limitation of these methods can already be seen if one applies them to understand symmetries of manifolds of a particularly simple topological type, for example homotopy spheres or homotopy complex projective spaces. A prominent instance is a conjecture of Petrie \cite{Pe72} stating that among homotopy complex projective spaces only the ones with standard total Pontrjagin class can possibly admit a smooth non-trivial action by the circle $S^1$ (cf. for example \cite{DeTop} and references therein for methods and partial results confirming the conjecture). For a new result in this direction see Theorem \ref{Petrie theorem}, where the conjecture is proven in complex dimension $<12$ for rank two torus actions.

The main purpose of this paper is to study symmetries of complete intersections. In particular, we consider the following
\begin{question}\label{q1}
Which complete intersections admit a smooth non-trivial $S^1$-action?
\end{question}
This question bears resemblance to Petrie's conjecture although here the manifolds are not considered up to homotopy. In fact, the characteristic classes of a complete intersection live in a subring which is rationally isomorphic to the cohomology ring of a complex projective space and there is evidence that the Pontrjagin classes are relevant for the question above (see Theorem \ref{maintechnicaltheorem}).

Complete intersections play an important r\^ole in algebraic
geometry. According to a conjecture of Hartshorne \cite{Ha74} smooth subvarieties in projective space are of this kind provided their codimension is sufficiently small. In topology the classification of complete intersections up to diffeomorphism,
homeomorphism or homotopy equivalence has been an active research area for many
decades (see for example \cite{LiWo81, LiWo83}, \cite[Section
8]{Kr99}).

Many results are known about the symmetries of complete intersections viewed as complex manifolds. It is a classical fact that the automorphism group of a complete intersection is finite if its first Chern class is negative (cf. \cite[III.2]{Ko}). Moreover, as shown recently \cite[Theorem 3.1]{Be13} for complex dimension $\geq 2$ the automorphism group is zero-dimensional except for the quadric and the projective spaces. In other words only the homogeneous complete intersections admit a non-trivial circle
action preserving the complex structure. In contrast, only little seems to be known about {\em smooth} symmetries. Understanding the symmetries of a complete intersection (viewed as a smooth manifold) is a natural but difficult problem. We believe that these manifolds are good candidates to test the strength/limitation of methods from the theory of transformation groups.

In this paper we classify complete intersections with non-trivial smooth $S^1$-action in real dimension $\leq 6$. We also prove that in any odd complex dimension only finitely many complete intersections can carry a smooth effective action by a torus of rank $>1$.

Let us first recall that a complete intersection $X_n(d_1,\dots,d_r)\subset \C P^{n+r}$ is a
smooth $2n$-dimensional manifold given by a transversal intersection
of $r$ non-singular hypersurfaces in complex projective space. The
hypersurfaces are defined by homogeneous polynomials whose degrees
are given by an unordered $r$-tuple $d_1,\ldots ,d_r$. In general,
the induced complex structure of $X_n(d_1,\dots,d_r)$ depends on the choice
of the polynomials (cf. \cite{Hi54}, \S 2.1). However, as noted by Thom, the oriented diffeomorphism
type of a complete intersection only depends on $n$ and the
multi-degree $(d_1,\ldots ,d_r)$.

The two-dimensional complete intersections with $S^1$-symmetry are
diffeomorphic to the sphere or the torus and are given by
$X_1(1)\cong X_1(2)\cong S^2$ and $X_1(3)\cong X_1(2,2)\cong
S^1\times S^1$. This follows directly from the Lefschetz fixed point formula for the Euler characteristic and the classification of surfaces. Note that these complete intersections also admit holomorphic $S^1$-actions with respect to their natural complex structure (cf. \cite[Theorem 3.1]{Be13}).

In dimension four the classification of complete intersections with holomorphic $S^1$-symmetries does not coincide with the classification for smooth $S^1$-symmetries. Using Seiberg-Witten theory one can show the following

\begin{theorem}[Theorem \ref{4-theorem}]\label{intro 4-theorem}
A $4$-dimensional complete intersection $X_2(d_1,\dots,d_r)$ admits
a smooth non-trivial $S^1$-action if and only if
$X_2(d_1,\dots,d_r)$ is diffeomorphic to a complex projective plane
$X_2(1)$, a quadric $X_2(2)$, a cubic $X_2(3)$ or an intersection of
two quadrics $X_2(2,2)$.
\end{theorem}

Note that the $4$-dimensional complete intersections with smooth $S^1$-symmetries are precisely the ones with positive first Chern class. In dimension six we prove that only the homogeneous complete intersections admit $S^1$-symmetries.

\begin{theorem}\label{main theorem}
A $6$-dimensional complete intersection $X_3(d_1,\dots,d_r)$ admits
a smooth non-trivial $S^1$-action if and only if
$X_3(d_1,\dots,d_r)$ is diffeomorphic to the complex projective
space $X_3(1)$ or the quadric $X_3(2)$.
\end{theorem}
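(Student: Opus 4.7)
The ``if'' direction is clear: both $\C P^3=X_3(1)$ and the smooth quadric threefold $X_3(2)$ admit standard holomorphic $S^1$-actions inherited from their ambient projective spaces. My approach to the converse is to translate the hypothetical existence of a smooth non-trivial $S^1$-action on $X:=X_3(d_1,\dots,d_r)$ into numerical constraints on the characteristic numbers of $X$, and then to exploit the fact that these characteristic numbers are explicit polynomials in the multi-degree.

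I would first write out the basic invariants. From $c(X)=(1+H)^{r+4}\prod_{i}(1+d_iH)^{-1}|_X$ together with $H^3[X]=d_1\cdots d_r$ one computes $\chi(X)=c_3[X]$, the Chern numbers $c_1^3[X]$ and $c_1c_2[X]$, the Pontryagin class $p_1(X)$, and, more importantly, the values of rigid genera on $X$ as functions of $(d_1,\dots,d_r)$. I would also record the parity condition (an even number of even $d_i$'s) characterising those complete intersections which are spin.

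Next, I would split cases according to whether the action has fixed points. If $\chi(X)\ne 0$ the fixed set $F=X^{S^1}$ is non-empty, its components are closed submanifolds of even codimension, so in real dimension six they are points, surfaces or $4$-manifolds, and $\chi(F)=\chi(X)$. The main tool is rigidity: on spin six-manifolds the universal Ochanine/Bott--Taubes elliptic genus is rigid, and for the non-spin complete intersections the Spin$^c$-refined elliptic genera developed in earlier work of the first author are rigid under smooth $S^1$-actions on almost-complex manifolds. In either setting rigidity yields an equality between $\phi(X)$, read off globally from the multi-degree, and a sum of local contributions over the components of $F$, expressed in terms of the normal weights. A case analysis on the possible types of fixed components then shows that the resulting Diophantine system admits a non-trivial action only when $(d_1,\dots,d_r)=(1)$ or $(2)$. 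The fixed-point-free case $\chi(X)=0$ I would treat separately by a Borel-type argument on $X\times_{S^1}ES^1$, exploiting that $p_1(X)$ cannot come from the quotient for any non-trivial multi-degree.

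The main obstacle I anticipate is the fixed-point side: enumerating the possible fixed configurations in dimension six and verifying that the localisation identities for the rigid genera simultaneously eliminate every multi-degree outside $\{(1),(2)\}$. A secondary difficulty is the Calabi--Yau and general-type range (multi-degrees with $\sum d_i\ge r+4$, e.g.\ $X_3(5)$ or $X_3(3,3)$), where the elliptic constraints are the weakest and one has to choose the rigid invariant carefully in order to extract sharp information.
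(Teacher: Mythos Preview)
Your plan has a structural gap in dimension six. The Ochanine/Bott--Taubes elliptic genus, like the signature and the $\hat A$-genus, is a polynomial in the Pontrjagin classes; since there is no Pontrjagin number in real dimension $6$ (the only class is $p_1$, of degree $4$), every such genus vanishes identically on a closed oriented $6$-manifold. So for the spin complete intersections the global side of your rigidity identity is $0$ for trivial reasons, and the single relation you extract from it (namely that the sum of local contributions is $0$) is far too weak to pin down the multi-degree. For the non-spin case you invoke Spin$^c$-refined elliptic genera ``rigid under smooth $S^1$-actions on almost-complex manifolds,'' but the hypothesis in the theorem is only a \emph{smooth} circle action, not one preserving any almost-complex or Spin$^c$ structure; the known rigidity theorems of that flavour do not apply without further assumptions, and even granting a lift to the determinant line bundle (possible since $b_1=0$) the relevant twisted indices are generally not constant characters.

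What the paper actually does is quite different and more robust. It first reduces Theorem~\ref{main theorem} to a purely topological statement about $6$-manifolds (Theorem~\ref{maintechnicaltheorem}): any closed oriented $M^6$ with torsion-free homology, $b_1=0$, $H^2=\langle x\rangle$, $x^3\neq 0$, $p_1=\rho x^2$ with $\rho\le 0$, and $\chi(M)<4$ admits no non-trivial smooth $S^1$-action. The only input from the multi-degree is the easy verification that every $X_3(d_1,\dots,d_r)$ other than $X_3(1)$ and $X_3(2)$ satisfies these inequalities; there is no Diophantine analysis in $(d_1,\dots,d_r)$ at all. The proof of Theorem~\ref{maintechnicaltheorem} then runs a case analysis of the possible fixed-point configurations, but the key tools are the Atiyah--Bott--Berline--Vergne localisation of the equivariant classes $x^3$ and $p_1(M)\cdot x$ (lifting the action to the line bundle $L$ with $c_1(L)=x$, and exploiting the freedom in the lift to obtain polynomial identities in an integer parameter), together with the rigidity of the equivariant signature. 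Neither ingredient requires a spin or almost-complex structure. In short, the workhorse is not a rigid genus but the localisation of $[x^3]_M$ and $[p_1\cdot x]_M$; your outline omits this entirely, and without it the fixed-point analysis cannot be closed.
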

In particular, some $6$-dimensional complete intersections, like the
cubic $X_3(3)$ or the quartic $X_3(4)$, have positive first Chern
class but do not admit a smooth non-trivial $S^1$-action (for higher dimensional examples w.r.t. torus actions see Corollary \ref{Ricci Beispiele}). By Yau's solution \cite{Ya78} of the Calabi conjecture these manifolds also admit metrics of positive Ricci curvature.
This answers a question of Wilderich Tuschmann in the negative.
He asked the following variant of a problem of Yau: Does every manifold which admits a metric of positive Ricci curvature also admit a smooth non-trivial circle action?
Finding a counterexample to this question was the original motivation for our investigation. The problem of Yau  \cite[Problem 3, p. 671]{Ya82}, which asks whether a manifold of positive {\em sectional} curvature admits a smooth effective $S^1$-action, is still open.

Theorem \ref{main theorem} follows from a more general statement about
$6$-dimensional manifolds (see Theorem \ref{maintechnicaltheorem}) which we
prove using methods from equivariant cohomology and equivariant
index theory. For Hamiltonian circle actions Theorem \ref{main theorem} can also be
deduced from work of Tolman \cite{To10} on the classification
of Hamiltonian circle actions on symplectic $6$-dimensional manifolds with
$b_2=1$.

In higher dimensions only partial results towards a classification of complete intersections with
$S^1$-symmetry are known (at least to the authors). Examples with
$S^1$-symmetry, which come into mind, are of course the homogeneous ones, i.e. the complex projective
space and the quadric, which are diffeomorphic to
\(SU(n+1)/S(U(n)\times U(1))\) and \(SO(n+2)/SO(n)\times SO(2)\),
respectively. It is tempting to conjecture that, like in the complex setting, these are the only ones.

By a theorem of Atiyah and Hirzebruch
\cite{AtHi70} the index of the Dirac operator, the $\hat A$-genus,
vanishes on spin complete intersections with smooth non-trivial
$S^1$-action. In dimension $2n=4k$ the $\hat A$-genus of a spin complete
intersection $X_{n}(d_1,\dots,d_r)$ vanishes if and only if
$n+r+1-\sum _{j=1}^r d_j>0$, i.e. if the first Chern class is
positive. This was first shown by Brooks \cite{Br83} who gave an
explicit formula for the $\hat A$-genus in terms of $n$ and the
multi-degree $(d_1,\ldots ,d_r)$. In particular, the number of
diffeomorphism types of complete intersections with smooth
non-trivial $S^1$-action is finite if one restricts to spin complete
intersections and to a fixed even complex dimension. Also diffeomorphism finiteness is known for complete intersections in odd complex dimensions for nice $Pin(2)$-actions (cf.  also \cite[Th. 5.1]{De02} for a related result for special $S^3$-actions). Here the proof relies on vanishing results for indices of twisted $Spin^c$-Dirac operators and twisted elliptic genera given in \cite{De99}, \cite[Section 4]{DeTop}. As shown recently by the second author \cite{Wi15} this method can also be applied in the case of smooth effective torus actions provided the rank of the torus is larger than the second Betti number of the manifold. For complete intersections this gives the following finiteness theorem.

\begin{theorem}[Corollary \ref{finiteness corollary}]\label{rank two finiteness theorem}
  For each odd \(n\), there are, up to diffeomorphism, only finitely many complete intersections of complex dimension \(n\) which admit a smooth effective action of a \(2\)-dimensional torus.
\end{theorem}

More precise classification results have been obtained by Lev Kiwi for complex $5$-dimensional complete intersections with circle action. His PhD thesis contains an almost complete analysis of the possible fixed point data of circle actions in this dimension. From this analysis it follows that a complex $5$-dimensional complete intersection which admits a smooth effective action of a two-dimensional torus is diffeomorphic to the complex projective space or the quadric, i.e. is homogeneous (cf. \cite{KiDiss} for details).

The paper is structured as follows. In the next section we review basic properties of complete intersections. In Section \ref{low dim section} we explain
the aforesaid classification of $4$-dimensional complete
intersections with $S^1$-symmetry and derive Theorem \ref{main
theorem} from a more general theorem about certain $6$-dimensional manifolds (see Theorem
\ref{maintechnicaltheorem}). In Section \ref{localization section} we recall the Atiyah-Bott-Berline-Vergne localization formula and derive a structural result for manifolds whose even degree rational cohomology subring is like the one of a complex projective space (see Proposition \ref{structure prop}). Section \ref{section prelim} contains some preliminary facts about $6$-dimensional manifolds with $S^1$-symmetry. The proof of
Theorem \ref{maintechnicaltheorem} consists of a case by case study
of the possible $S^1$-fixed point configurations which is carried out in Section \ref{sectionmaintechnicaltheorem}. In Section \ref{finiteness section} we first discuss vanishing theorems for twisted elliptic genera of $Spin^c$-manifolds with torus action and then prove Theorem \ref{rank two finiteness theorem} as well as a special case of the Petrie conjecture (see Theorem \ref{Petrie theorem}). In the appendix we collect formulas from
equivariant cohomology and equivariant index theory which are used
in the proof.
\bigskip

The first author likes to thank Volker Puppe for stimulating discussions on the subject and on possible generalization to continuous $S^1$-actions. The second author wants to thank Nigel Ray and the University of Manchester for hospitality while he was working on this paper.
We also want to thank Daniel Loughran for giving us the reference \cite{Be13} and Matthias Franz for helpful comments on an earlier version of the paper.

\section{Basic properties of complete intersections}
In this section we review relevant topological properties of complete intersections. Let $M:=X_n(d_1,\dots,d_r)$ be a complete intersection given by a
transversal intersection of $r$ non-singular hypersurfaces in $\C
P^{n+r}$ of degree $d_1,\ldots ,d_r$. The oriented diffeomorphism type
of $M$ only depends on $n$ and the multi-degree $(d_1,\dots,d_r)$. Note that
we may always assume $d_j\geq 2 $ if $r\geq 2$. In fact, up to
diffeomorphism, intersection with hypersurfaces of degree one
amounts to cutting down the dimension of the ambient complex
projective space.

Let $\gamma $ denote the restriction of the dual Hopf bundle over
$\C P^{n+r}$ to $M$ and let $x:=c_1(\gamma )\in H^2(M;\Z )$. For
later reference we collect some properties of $M$ which follow from
the Lefschetz hyperplane theorem, Poincar\'e duality and properties
of characteristic classes.

\begin{prop}\label{compinterprop}
\begin{enumerate}
\item $M$ is simply connected for $n>1$.
\item $H^*(M;\Z )$ is torsion-free. The cohomology groups of $M$ and $\C P^n$ are isomorphic outside the middle dimension, i.e. $H^i(M;\Z)\cong H^i(\C P^n;\Z )$ for $i\neq n$. Moreover $H^{2i}(M;\Z)$ is generated by $x^i$ for $2i<n$ .
\item $[x^n]_M=\prod_j d_j$, where $[\quad ]_M$ denotes evaluation on the fundamental cycle.
\item The total Chern class of $M$ is given by
$$c(M)=(1+x)^{n+r+1}\cdot \prod _{j=1}^r(1+d_j\cdot x)^{-1}.$$
In particular, $c_1(M)=(n+r+1-\sum _j d_j)\cdot x$.
\item The total Pontrjagin class of $M$ is given by
$$p(M)=(1+x^2)^{n+r+1}\cdot
\prod
_{j=1}^r(1+d_j^2\cdot x^2)^{-1}.$$
In particular, $p_1(M)=(n+r+1-\sum _j d_j^2)\cdot x^2$.
\item The Euler characteristic of $M$, $\chi (M)$, is equal to $[c_n(M)]_M$.
 For $n=1$, $\chi (M)=d_1\cdot \ldots \cdot d_r\cdot (2-\sum _{j=1}^r(d_j-1))$.
  For $n=3$, $\chi (M)<0$ except for $M=X_3(1)=\C P^3$ and $M=X_3(2)=SO(5)/(SO(3)\times SO(2))$
   which have Euler characteristic equal to $4$.
\end{enumerate}
\end{prop}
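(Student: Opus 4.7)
The plan is to group the six statements by their natural proofs: (1) and (2) come from the Lefschetz hyperplane theorem, (3)--(5) from a computation involving the normal bundle of $M$ in $\C P^{n+r}$, and (6) from Chern-Gauss-Bonnet together with a combinatorial check.

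For items (1) and (2), I would apply the Lefschetz hyperplane theorem iteratively along the chain of smooth hypersurface inclusions $M \subset X_n(d_1,\dots,d_{r-1}) \subset \cdots \subset X_n(d_1) \subset \C P^{n+r}$. Composing, the inclusion $M \hookrightarrow \C P^{n+r}$ is $n$-connected, which immediately gives $\pi_1(M) = 0$ for $n>1$ and identifies $H^i(M;\Z)$ with $H^i(\C P^{n+r};\Z)$ for $i<n$. Poincar\'e duality together with the universal coefficient theorem then extends this identification above the middle dimension and forces $H^*(M;\Z)$ to be torsion-free.

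For (3)--(5), the key input is that a smooth degree-$d$ hypersurface in $\C P^{n+r}$ has normal bundle $\gamma^{\otimes d}$, so $\nu_{M/\C P^{n+r}} = \bigoplus_{j=1}^r \gamma^{\otimes d_j}|_M$. Item (3) follows because the Poincar\'e dual of $[M]$ in $\C P^{n+r}$ is $\prod_j d_j \cdot c_1(\gamma)^r$, yielding $[x^n]_M = \prod_j d_j$. For (4), the Whitney product formula $c(TM) \cdot c(\nu) = c(T\C P^{n+r})|_M$ combined with the Euler sequence $c(T\C P^{n+r}) = (1+x)^{n+r+1}$ gives the stated formula, and $c_1(M)$ is read off as its linear term. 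For (5), one computes $c(TM \otimes_\R \C) = c(TM) \cdot \overline{c}(TM)$ and applies the convention $p_i = (-1)^i c_{2i}(\cdot \otimes_\R \C)$; the sign bookkeeping has the effect of replacing each appearance of $x$ by $x^2$ in the formula of (4).

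For (6), Chern-Gauss-Bonnet gives $\chi(M) = [c_n(M)]_M$. The $n=1$ formula is immediate: combining $c_1(M) = (2 - \sum(d_j-1))x$ from (4) with $[x]_M = \prod d_j$ from (3) produces the stated closed form. For $n=3$, items (1) and (2) yield $\chi(M) = 4 - b_3(M)$, and the values $\chi(X_3(1)) = \chi(X_3(2)) = 4$ follow by direct evaluation of the Chern-class generating function. The main obstacle is verifying the strict inequality in all remaining cases: I would extract the coefficient of $x^3$ in $(1+x)^{4+r}\prod_j(1+d_j x)^{-1}$ and multiply by $\prod_j d_j$ to obtain an explicit polynomial in $(d_1,\dots,d_r)$, then exploit monotonicity of this expression in each $d_j \geq 2$ and in $r$ to reduce to a finite list of small multi-degrees that can be checked by hand.
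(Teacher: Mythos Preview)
Your outline is the standard route and matches what the cited references contain; the paper itself does not give an argument but simply refers to Hirzebruch \cite{Hi54} (and \cite{Ch78} for the inequality in (6)). Items (1)--(5) and the $n=1$ case of (6) are exactly as you describe.

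One caveat on the $n=3$ inequality in (6): your finite check will turn up $\chi(X_3(2,2))=0$, not a strictly negative number. Indeed, the coefficient of $x^3$ in $(1+x)^6(1+2x)^{-2}$ is $-32+72-60+20=0$, so $c_3(X_3(2,2))=0$; equivalently $h^{1,2}(X_3(2,2))=2$ and $b_3=4$. Thus the strict inequality as stated in the proposition is slightly off at this one multi-degree. Your monotonicity reduction is still sound and shows $\chi\leq 0$ for all $X_3(d_1,\dots,d_r)$ other than $X_3(1)$ and $X_3(2)$, with equality only at $(2,2)$. For the application to Theorem~\ref{maintechnicaltheorem} the paper only needs $\chi<4$, which follows immediately from your observation $\chi=4-b_3$ together with $b_3>0$ whenever $M\neq X_3(1),X_3(2)$; so nothing downstream is affected.
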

For a proof of these properties see for example \cite{Hi54}. The
inequality for the Euler characteristic stated in (6) may be deduced
from \cite[formula (5), p. 465]{Hi54}, see also~\cite{EwMo76}.

If a complete intersection comes with an action by a torus $T$ one can consider the Serre spectral sequence for \(H^*(M_T;\mathbb{Q})\), where $M_T:=ET\times _{T} M$ is the Borel construction. It turns out that the spectral sequence degenerates at the \(E_2\)-level. We will explain this in the following more general situation:

Let $M$ be a $2n$-dimensional closed oriented manifold, $n\geq 2$. If $n$ is even we assume that $H^*(M;\mathbb{Q})$ is concentrated in even degrees. If $n$ is odd we assume $b_1(M)=0$ and that the rational cohomology in even degrees, $H^{\ev}(M;\mathbb{Q})$, is generated by elements of degree $2$. Note that these assumptions are satisfied by complete intersections. Suppose a torus $T$ acts on $M$. Then we have

\begin{lemma}\label{specseq lemma}
 The Serre spectral sequence for \(H^*(M_T;\mathbb{Q})\) degenerates at the \(E_2\)-level, i.e. \(M\) is equivariantly formal.
\end{lemma}
\begin{proof} If \(n\) is even the cohomology of \(M\) is concentrated in even degrees. Hence, the Serre spectral sequence degenerates in this case.

Therefore assume that \(n\) is odd. Since \(b_1(M)=0\), all differentials \(d_r\) vanish on \(E_r^{*,2}\). Since $H^{\ev}(M;\mathbb{Q})$ is generated by elements of degree $2$ it follows by the multiplicativity of \(d_r\) that all differentials $d_r$ vanish on \(E_r^{*,2*}\). Suppose the differentials $d_{s}$, $s<r$, vanish on \(E_s^{*,*}\). To conclude that this also holds for $d_r$ we need to show that the image of an element $y\in E_r^{*,2*+1}$ under $d_r$ is zero. Looking at homogeneous parts we may assume that $y\in E_r^{*,2n-2k+1}$. By dimensional reasons we have \(E_2^{*,2n+1}=E_\infty^{*,2n+1}=0\). Hence, it follows from the multiplicativity of the differential that for classes $x_i \in E_r^{0,2}\cong  H^2(M;\Q)$, $i=1,\ldots ,k$, one always has $d_r(y)\cdot \prod_i x_i=d_r(y\cdot\prod _i x_i)=0$. Note that $E_r^{*,*}=E_2^{*,*}\cong H^*(BS^1;\Q )\otimes H^*(M;\Q )$. Therefore, by Poincar\'e duality, it follows that $d_r(y)=0$. Hence, the spectral sequence degenerates at the \(E_2\)-level.
\end{proof}

\section{$S^1$-actions on low dimensional complete intersections}\label{low dim section}
In this section we discuss Question \ref{q1} in more detail for
complete intersections in low dimensions. We give a proof of Theorem \ref{intro 4-theorem} using Seiberg-Witten theory. We also state a theorem about certain $6$-dimensional manifolds and
apply it to obtain the classification of $6$-dimensional
complete intersections with $S^1$-symmetry mentioned in the
introduction.

Let us start with the discussion for surfaces. By the classical Lefschetz fixed point formula the Euler
characteristic of a manifold with $S^1$-action is equal to the Euler
characteristic of the $S^1$-fixed point manifold. Combining this
with the classification of surfaces it follows that the only
orientable closed $2$-manifolds with $S^1$-symmetry are the sphere $S^2$
and the torus $S^1\times S^1$. Applying the formula for the Euler
characteristic given in Proposition \ref{compinterprop} (6) one
finds that among $2$-dimensional complete intersections only
$X_1(1)\cong X_1(2)\cong S^2$ and the elliptic curves $X_1(3)\cong X_1(2,2)\cong
S^1\times S^1$ admit a smooth non-trivial $S^1$-action.

In dimension four Seiberg-Witten theory leads to the following classification, probably well-known to the experts.
Since we couldn't
find a proof in the literature, we give
an argument below.
\begin{theorem}\label{4-theorem}
A $4$-dimensional complete intersection $X_2(d_1,\dots,d_r)$ admits
a smooth non-trivial $S^1$-action if and only if
$X_2(d_1,\dots,d_r)$ is diffeomorphic to a complex projective plane
$X_2(1)$, a quadric $X_2(2)$, a cubic $X_2(3)$ or an intersection of
two quadrics $X_2(2,2)$.
\end{theorem}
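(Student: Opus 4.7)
My plan is to dispatch the ``if'' direction by direct geometric construction and then attack the converse by the sign of $c_1$. For the easy direction, $X_2(1)=\C P^2$ and $X_2(2)\cong S^2\times S^2$ are homogeneous, while $X_2(3)$ and $X_2(2,2)$ are del Pezzo surfaces of degrees $3$ and $4$, diffeomorphic to $\C P^2$ blown up at $6$ and $5$ points respectively; each inherits a smooth $S^1$-action by iteratively blowing up the standard toric $\C P^2$ at torus-fixed points.

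For the converse, assume $X:=X_2(d_1,\dots,d_r)$ is not among the four. By Proposition \ref{compinterprop}(4) and the normalization $d_j\geq 2$ (for $r\geq 2$), we have $c_1(X)=(3+r-\sum d_j)\,x$ with $3+r-\sum d_j\leq 0$, and I split into two cases according to whether $c_1$ vanishes or is strictly negative.

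In the case $c_1(X)=0$, the multi-degree is one of $(4),(2,3),(2,2,2)$, so $X$ is diffeomorphic to a K3 surface. Since $c_1\equiv w_2\pmod 2$, $X$ is spin, and Brooks' formula referenced in the introduction gives $\hat A(X)=2$. The Atiyah--Hirzebruch vanishing theorem then precludes any smooth non-trivial $S^1$-action. In the case $c_1(X)<0$, the natural complex structure makes $X$ a minimal Kähler surface of general type with ample canonical bundle. By the theorems of Taubes and Witten, $\pm c_1$ are Seiberg--Witten basic classes, so the SW invariants of $X$ are nontrivial; a short Hodge-theoretic count of sections of the canonical bundle on a complete intersection shows $p_g(X)\geq 1$, hence $b^+(X)\geq 3$. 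A smooth non-trivial $S^1$-action would be either fixed-point-free---forcing $\chi(X)=0$ and contradicting $\chi(X)>0$ from Proposition \ref{compinterprop}(6)---or would have a fixed point, in which case Baldridge's vanishing theorem for Seiberg--Witten invariants of $4$-manifolds with circle action and $b^+\geq 2$ would force the SW invariants to vanish, a contradiction.

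The main obstacle is assembling the correct Seiberg--Witten obstruction in the general-type stratum: one must combine the non-vanishing of the SW basic classes on Kähler surfaces of general type with the vanishing theorem for SW invariants under a smooth $S^1$-action carrying a fixed point, and handle the fixed-point-free alternative separately via the Euler characteristic. Once these are in place, the remaining verifications (the arithmetic trichotomy on $\sum d_j-3-r$, the computation of $\hat A=2$ for K3, and the positivity of $\chi$) are routine consequences of Proposition \ref{compinterprop}.
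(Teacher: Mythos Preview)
Your proof is correct but organizes the converse differently from the paper. The paper handles all complete intersections outside the list by a single Seiberg--Witten argument: it invokes the known fact that any such $M$ has $b_2^+(M)>1$ (citing \cite{LiWo81}), then applies Witten's nonvanishing of the SW invariant for K\"ahler surfaces with $b_2^+>1$ together with Baldridge's vanishing theorem, using $\chi(M)>0$ to force a fixed point. You instead split on the sign of $c_1$: for $c_1=0$ (the K3 case) you avoid Seiberg--Witten entirely and invoke Atiyah--Hirzebruch via $\hat A=2$; for $c_1<0$ you recover $b_2^+\geq 3$ from $p_g\geq 1$ and then run the same Witten/Baldridge argument. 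Your approach buys a more elementary treatment of the K3 stratum at the cost of an extra case distinction; the paper's approach is more uniform but leans on an external reference for $b_2^+>1$ rather than deriving it. One small quibble: your appeal to Proposition~\ref{compinterprop}(6) for $\chi(X)>0$ in the general-type case is slightly off, since that item only states the Euler characteristic explicitly for $n=1$ and $n=3$; however, $\chi(X_2)=2+b_2\geq 3$ follows immediately from Proposition~\ref{compinterprop}(1)--(2), so this is harmless. The ``if'' direction is essentially identical to the paper's.
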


\begin{proof} We first explain why $X_2(1)$, $X_2(2)$, $X_2(3)$ and $X_2(2,2)$ admit
 a smooth non-trivial $S^1$-action.  For the complex projective space
$X_2(1)$ and the quadric $X_2(2)$, which are homogeneous, this is
obvious. One knows that $X_2(3)$ (resp. $X_2(2,2)$) is obtained by
blowing up $\C P^2$ at $6$ (resp. $5$) points in general position
(cf. \cite[page 653]{LiWo81}, \cite{Ma86}, \cite[Section
3.5]{Ko04}). Hence, $X_2(3)\cong \C P^2 \sharp 6 \overline {\C P^2}$
and $X_2(2,2)\cong \C P^2 \sharp 5 \overline {\C P^2}$. Since any
connected sum of $\C P^2$'s and $\overline {\C P^2}$'s admits a
smooth non-trivial $S^1$-action, the same holds for $X_2(3)$ and
$X_2(2,2)$.

To show that no other complete intersection admits a smooth
non-trivial $S^1$-action we combine certain facts about
Seiberg-Witten invariants. For any $4$-di\-mensional complete
intersection $M$ different from $X_2(1)$, $X_2(2)$, $X_2(3)$ and
$X_2(2,2)$ one knows that $b_2^+(M)$ (the dimension of the maximal
subspace of $H^2(M)$ on which the intersection form is positive
definite) is greater than one (cf. \cite[page 650]{LiWo81}). Since
$M$ is K\"ahler with $b_2^+(M)>1$ the Seiberg-Witten invariant for
$M$ with its preferred $Spin^c$-structure is $\pm 1$ by the
pioneering work of Witten \cite{Wi94}. On the other hand, Baldridge
showed in \cite{Ba04} that for any smooth closed $4$-manifold with
$b_2^+>1$ the Seiberg-Witten invariant vanishes if the manifold
admits a circle action with fixed point. Since $\chi(M)>0$ any
$S^1$-action on $M$ must have a fixed point. Hence, $M$ does not
admit a smooth non-trivial $S^1$-action.
\end{proof}

\begin{remarks}\begin{enumerate}
\item By Freedman's classification \cite{Fr82} of simply connected topological $4$-dimensional manifolds and the classification of indefinite odd forms any non-spin complete intersection $X_2(d_1,\dots,d_r)$ is homeomorphic to a connected sum of $\C P^2$'s and $\overline {\C P^2}$'s and, hence, admits a {\em continuous} non-trivial $S^1$-action.
\item The only spin complete intersection with {\em smooth} non-trivial $S^1$-action in dimension $4$ is the quadric $X_2(2)$. This follows directly from the $\hat A$-vanishing theorem of Atiyah-Hirzebruch \cite{AtHi70} and the formula for the first Pontrjagin class given in Proposition \ref{compinterprop} (5). The $\hat A$-vanishing theorem does not apply in general to {\em continuous} $S^1$-actions. However, it is known that the signature vanishes on a $4$-dimensional spin complete intersection with {\em continuous} $S^1$-action provided the involution in $S^1$ acts non-trivially and locally smoothly \cite{Ru95} or the number of orbit types near every $S^1$-fixed point is at most four \cite{HuPu98}. It follows that among spin complete intersections the quadric is the only one with such an action.
 \end{enumerate}
\end{remarks}

We now come to the classification of complete intersections with smooth
$S^1$-action in dimension $6$ stated as Theorem \ref{main theorem} in the introduction. This result is a
consequence of the following theorem which will be proved in Section \ref{sectionmaintechnicaltheorem}.

\begin{theorem}\label{maintechnicaltheorem}
Let $M$ be a smooth oriented closed $6$-dimensional manifold with
torsion-free homology, $b_1(M)=0$, $H^2(M;\mathbb{Z})=\langle
x\rangle$, \(p_1(M)=\rho \cdot x^2\) with \(\rho \leq 0\),
\(x^3\neq0\) and $\chi(M)<4$. Then $M$ does not support a
smooth non-trivial circle action.
\end{theorem}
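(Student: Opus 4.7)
\emph{Proof plan.} The plan is to argue by contradiction, assuming $M$ supports a non-trivial smooth $S^1$-action and deriving a contradiction from the constraints $\chi(M)<4$ and $\rho\leq 0$. The fixed-point set $F := M^{S^1}$ is a disjoint union of closed oriented submanifolds of even codimension, so its components have dimensions $0$, $2$, or $4$; the Lefschetz fixed-point formula gives $\chi(F)=\chi(M)$, and Poincar\'e duality makes $b_3(M)$ even, so $\chi(M)=4-b_3(M)\in\{\ldots,-2,0,2\}$. After orienting $M$ so that $d:=[x^3]_M>0$, the hypothesis $\rho\leq 0$ reads $[p_1\cdot x]_M=\rho d\leq 0$. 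The strategy is a case analysis on the dimensions of the components of $F$.

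The main computational tool will be the Atiyah--Bott--Berline--Vergne localization formula applied to equivariant extensions $\tilde x\in H^2_{S^1}(M)$ of $x$ and $\tilde p_1\in H^4_{S^1}(M)$ of $p_1$. At an isolated fixed point $p$ with weights $w_1,w_2,w_3$ and orientation sign $\varepsilon_p=\pm1$, one has $\tilde x|_p=n_p u$ and $\tilde p_1|_p=(w_1^2+w_2^2+w_3^2)u^2$. I would exploit the localization identities coming from $\int_M\tilde x^j$ (which vanishes for $j=0,1,2$ by degree reasons and equals $d$ for $j=3$) and from $\int_M\tilde p_1\tilde x=\rho d$, together with the rigidity of the signature: since $\mathrm{sign}(M)=0$ in dimension $6$, the sum of fixed-point contributions to $\mathrm{sign}_{S^1}(M)$ must vanish identically as a function of $g\in S^1$.

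When $F$ consists only of isolated fixed points, $|F|=\chi(M)$ is even and less than $4$, so $|F|\in\{0,2\}$. For $|F|=0$, localization gives $d=\int_M\tilde x^3=0$, contradicting $d\neq 0$; for $|F|=2$, the identities $\sum_p\varepsilon_p n_p^j/\prod_i w_i^p=0$ for $j=0,1,2$ successively force $\varepsilon_1/\prod w_i^{p_1}=-\varepsilon_2/\prod w_i^{p_2}$ and $n_{p_1}=n_{p_2}$, whence the $j=3$ identity yields $d=0$. When $F$ contains a $2$-dimensional component $\Sigma$ but no $4$-dimensional one, I would split the normal bundle $\nu_\Sigma$ equivariantly as $L_1\oplus L_2$ with nonzero weights $a_j$ and Chern classes $e_j\in H^2(\Sigma)$, write $\tilde x|_\Sigma=i^*x+m_\Sigma u$, expand the same localization identities in $u^{-1}$, and derive an incompatibility with $d\neq 0$, $\rho\leq 0$, and $\chi(M)<4$.

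The remaining case, where $F$ contains a $4$-dimensional component $N$, is where I expect the main difficulty. Since $H^2(M;\Z)=\langle x\rangle$, the Poincar\'e dual satisfies $PD[N]=k\cdot x$ for some $k\in\Z$, and any two disjoint $4$-dimensional components $N_i,N_j$ satisfy $k_i k_j=0$ because $x^2\neq 0$. From $e(\nu_N)=k\cdot i_N^*x$ and $p_1(M)|_N=p_1(N)+e(\nu_N)^2$ one gets $p_1(N)=(\rho-k^2)(i_N^*x)^2$; combining with the push-pull identity $[(i_N^*x)^2]_N=kd$ and the Hirzebruch signature theorem yields $3\,\mathrm{sign}(N)=(\rho-k^2)kd$. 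With $\rho\leq 0$ and $d>0$, $\mathrm{sign}(N)$ and $k$ must have opposite signs and vanish together, and $\chi(N)\leq\chi(F)=\chi(M)<4$. The hard part is that $N$ is not a priori simply connected or complex, so these inequalities alone leave candidates that must be eliminated; the decisive additional input is the identity $\int_M\tilde p_1\tilde x=\rho d\leq 0$, which couples $N$ to the remaining components of $F$ and, case-by-case, excludes every configuration.
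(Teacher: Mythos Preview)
Your overall strategy---contradiction via a case analysis on the dimensions of the components of $F=M^{S^1}$, with localization of $\tilde x^3$ and $\tilde p_1\tilde x$ and rigidity of the signature as the main computational tools---is exactly the paper's strategy. But there is a structural input you are missing that the paper uses to make the case analysis finite: from the degeneration of the Leray--Serre spectral sequence for $M\hookrightarrow M_{S^1}\to BS^1$ (which follows from $b_1(M)=0$ and the multiplicative structure of $H^*(M)$) one obtains $b_{ev}(M^{S^1})=b_{ev}(M)=4$, and in fact $b_{ev}(M^{\Z/p^l})=4$ for every prime $p$. This single constraint cuts the possibilities for $F$ down to seven explicit configurations. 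Without it your Case~2 (``$F$ contains a $2$-dimensional component but no $4$-dimensional one'') has no a~priori bound on the number of surface components or isolated points, and the localization identities alone do not close up; for instance nothing in your outline rules out three or more surfaces.

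The second missing ingredient is more serious and concerns precisely the cases you flag as vague. When $F$ consists of two surfaces, or one surface and two points, the paper does \emph{not} succeed with $S^1$-localization alone: it passes to the $\Z/p$-fixed set $M^{\Z/p}$ for a suitable prime $p$ dividing a normal weight, obtains a $4$-dimensional component $F'\subset M^{\Z/p}$ containing the relevant $S^1$-fixed components, and then applies the signature and $p_1$ arguments to $F'$. This step uses the equality $b_{ev}(M^{\Z/p})=4$ (to force connectedness of $F'$ or control its signature), orientability of $F'$, and a careful comparison of normal weights at the different $S^1$-fixed components sitting inside $F'$. In the ``one surface plus two points'' case this leads to a chain of lemmas showing the weights at the two isolated points must coincide, after which the $p_1\cdot x$ localization gives the contradiction with $\rho\le 0$. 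None of this $\Z/p$-analysis appears in your plan, and I do not see how to complete those cases without it.

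Your treatment of the purely isolated case ($|F|\in\{0,2\}$) is correct, and your outline for a single $4$-dimensional component with $\mathrm{sign}(N)=\pm1$ is close to the paper's Case~4; but the other $4$-dimensional subcases (e.g.\ $b_2(N)=0$ or $b_2(N)=2$) are disposed of in the paper by choosing the lift of $\tilde x$ so that the local contribution at $N$ vanishes, an idea worth making explicit.
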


\begin{proof}[Proof of Theorem \ref{main theorem}] The complex projective space
 $X_3(1)$ and the quadric $X_3(2)$ are homogeneous and, hence, admit a smooth non-trivial $S^1$-action.

Now assume $M:=X_3(d_1,\ldots ,d_r)$ is different from $X_3(1)$ and
$X_3(2)$. Then, either $r=1$ and $d_1\geq 3$ or $r\geq 2$ and
$d_j\geq 2\; \forall \, j$. In view of Proposition
\ref{compinterprop} $M$ satisfies all the conditions of Theorem
\ref{maintechnicaltheorem} and, hence, $M$ does not admit a smooth
non-trivial $S^1$-action.\end{proof}

\begin{remark} Note that there are examples with nontrivial circle action and \(\rho=1\) which satisfy all the other conditions in Theorem~\ref{maintechnicaltheorem}. They can be constructed as follows.

There are linear \(S^1\)-actions on \(S^3\) which have
one-dimensional fixed point sets. For a fixed point
the isotropy representation at this point is completely arbitrary. By
taking products of such actions we get an action of \(S^1\) on
\(S^3\times S^3\) with a two-dimensional fixed point set and
arbitrary isotropy representations at the fixed points.

Moreover, we may restrict the action of \(SO(5)\) on the complete
intersection \(X_3(2)=SO(5)/SO(3)\times SO(2)\) to a subgroup of
\(SO(5)\) isomorphic to \(S^1\) such that \(X_3(2)^{S^1}\) has a
two-dimensional component.

Therefore we may form the \(S^1\)-equivariant connected sum of
\(S^3\times S^3\) and \(X_3(2)\). This connected sum satisfies all
the assumptions of Theorem \ref{maintechnicaltheorem} except that
the first Pontrjagin class \(p_1((S^3\times S^3)\sharp X_3(2))\) is
equal to \(x^2\).\end{remark}

\begin{remark} For a $6$-dimensional manifold satisfying the cohomological assumptions in Theorem~\ref{maintechnicaltheorem} there are, by surgery theory, infinitely many pairwise non-diffeomorphic smooth manifolds inside its homotopy type with nonpositive first Pontrjagin class (i.e. $p_1=\rho \cdot x^2$ with $\rho \leq 0$). By Theorem~\ref{maintechnicaltheorem} none of these admit a smooth non-trivial $S^1$-action.
\end{remark}


\section{Atiyah-Bott-Berline-Vergne localization formula}\label{localization section}

In this section we recall the Atiyah-Bott-Berline-Vergne localization formula and illustrate its strength by a number of applications which will be used in the subsequent sections. For more information on the localization formula as well as some explicit formulas we refer to the appendix A.1. In the following we will restrict to smooth $S^1$-actions. However, many of the results also carry over to more general situations. In particular, Proposition \ref{structure prop}  below can be extended directly to torus actions.

Let $M$ be a smooth closed oriented connected $m$-dimensional manifold with smooth $S^1$-action and $M_{S^1}:=ES^1\times _{S^1} M$ the Borel construction. Let $[\quad ]_M:H^m(M;\Z )\to H^{0}(pt;\Z )\cong \Z$ denote evaluation on the fundamental cycle of $M$ and let $[\quad ]_M$ also denote the integration over the fiber map $H^m(M_{S^1};\Z )\to H^{0}(pt_{S^1};\Z )=H^{0}(B{S^1};\Z )\cong \Z$.

Let $y\in H^m(M;\Z )$ and suppose $\bar y\in H^{m}(M_{S^1};\Z )$ is an equivariant cohomology class which restricts to $y$ under $H^*(M_{S^1};\Z )\to H^*(M;\Z )$. Note that $[y]_M=[\bar y]_M$ since $\deg y=\dim M$. By the localization formula in equivariant cohomology of Atiyah-Bott and Berline-Vergne \cite{BeVe82, AtBo84} the equivariant class $[\bar y]_M$ can be computed in terms of local data at $M^{S^1}$:

\begin{equation}\label{loc formula} [y]_M=[\bar y]_M=\sum _{Z\subset M^{S^1}} \mu
(\bar y,Z).\end{equation}
Here the sum runs over the connected components $Z$ of $M^{S^1}$ (with fixed orientation) and the local datum $\mu
(\bar y,Z)$ at $Z$ is given by
$$\mu
(\bar y,Z)=[(\bar y_{\vert Z})\cdot e_{S^1}(\nu _Z)^{-1}]_Z,$$
where $\bar y_{\vert Z}$ is the restriction of $\bar y$ to $Z$ and $e_{S^1}(\nu _Z)$ is the Euler class of the equivariant normal bundle of $Z\subset M$.

In the following we will assume that $x\in H^2(M;\Z )$ is a class which can be lifted to an equivariant class $\bar x\in H^2(M _{S^1};\Z )$. A simple spectral sequence argument shows that for $b_1(M)=0$ this is always the case. In more geometric terms this means that the $S^1$-action lifts to the complex line bundle $\gamma$ over $M$ with $c_1(\gamma )=x$ and $\bar x$ is the first Chern class of the equivariant line bundle \cite[Corollary 1.2]{HaYo76}. Moreover the choice of $\bar x$ corresponds to the choice of the lift of the $S^1$-action to $\gamma$.

The restriction of $\bar x$ to a connected component $Z$ of $M^{S^1}$ takes the form $x_{\vert Z}+a_Z\cdot z$ where $z\in H^2(BS^1;\Z)$ is the preferred generator and $a_Z\in \Z $ is the weight of the $S^1$-representation given by restricting $\gamma $ to a point in $Z$. Hence, the $S^1$-equivariant first Chern class $\bar x$ at the connected components is given by $\{x_{\vert Z}+a_Z\cdot
z\, \mid \, Z\subset M^{S^1}\}$. For any $l\in \Z$ we can choose as a lift the class $\bar x +l\cdot z$ or, more geometrically, we can change the $S^1$-action on $\gamma $ by tensoring the line bundle with the complex one dimensional representation with weight $l$. Note that for the new lift the restriction to the connected components is given by $\{x_{\vert Z}+(a_Z+l)\cdot
z\, \mid \, Z\subset M^{S^1}\}$. In particular, we can choose for a component $Z$ a lift $\bar x_Z\in H^2(M _{S^1};\Z )$ of $x$ such that the restriction of $\bar x_Z$ to $Z$ is equal to $x_{\vert Z}$ (i.e. a fiber of $\gamma $ over a point of $Z$ is a trivial $S^1$-representation). For later use let us point out the following lemma which follows directly.

\begin{lemma}\label{lemma 4.1} Let $y\in H^m(M;\Z )$ be divisible by $x^r$ and let $\bar y\in H^{m}(M_{S^1};\Z )$ be an equivariant lift of $y$ which is divisible by $\bar x_Z^r$. Suppose $x_{\vert Z}^r$ vanishes in $H^*(Z;\Q )$. Then the local datum $\mu (\bar y,Z)$ in (\ref{loc formula}) vanishes.\proofend
\end{lemma}

Next we apply the foregoing to the situation where $M$ is of even dimension,  $m=2n$, and $[x^n]_M\neq 0$. Let $Z_i$, $i=1,\ldots ,k$, denote the connected components of $M^{S^1}$ and let $r_i\geq 0$ be such that $x_{\vert Z_i}^{r_i}\neq 0$ and $x_{\vert Z_i}^{r_i+1}=0$ in $H^*(Z;\Q )$. Then we have

\begin{lemma} $\sum _i (r_i+1)\geq n+1$.\end{lemma}

\begin{proof} Let us assume to the contrary that $\sum _i (r_i+1)\leq n$. Then we can choose lifts $\xi_j $, $j=1,\ldots ,n$, of $x$ such that for each $i$ at least $r_i+1$ of the $\xi_j$'s have the property that there restriction to $Z_i$ is equal to $x_{\vert Z_i}$.

Let $\bar y:=\prod_j \xi _j \in H^{2n}(M_{S^1};\Z )$. By the previous lemma the local datum $\mu (\bar y,Z_i)$ of $[\bar y]_M$ at $Z_i$ vanishes for every $i$. This contradicts $[\bar y]_M=[x^n]_M\neq 0$ in view of $(\ref{loc formula})$.\end{proof}

In terms of Betti numbers the last lemma says that the sum of even Betti numbers  of $M^{S^1}$, $b_{\ev} (M^{S^1})$, is at least $n+1$ (note that $b_{\ev}(M^{S^1})= \sum _i b_{\ev}(Z_i)$ and $b_{\ev}(Z_i)\geq r_i+1$ for trivial reasons). In the case $b_{\ev} (M)=n+1$ one obtains the following structural result.

\begin{prop}\label{structure prop}  Let \(M\) be a smooth \(S^1\)-manifold of dimension \(m=2n\) with $b_1(M)=0$ such that
  \begin{equation*}
    H^{\ev}(M;\mathbb{Q})=\mathbb{Q}[x]/(x^{n+1})\cong H^{\ev}(\C P^n; \mathbb{Q})
  \end{equation*}
as algebras with \(\deg x = 2\)  and let \(\bar{x}\in H^2(M_{S^1};\mathbb{Q})\) be a lift of \(x\). Let \(Z_1,\dots, Z_k\) be the components of \(M^{S^1}\) and  \(n_i=\frac{1}{2}\dim Z_i\). For \(i=1,\dots,k\) let \(pt_i\in Z_i\) and \(a_i=\bar{x}|_{pt_i}\in H^2(BS^1;\mathbb{Q})\). Then we have:
\begin{enumerate}
\item \(H^{\ev}(Z_i;\mathbb{Q})\cong H^{\ev}(\C P^{n_i};\mathbb{Q})\),
\item $x$ restricts to a generator of $H^2(Z_i;\Q )$,
\item \(\sum_i (n_i+1)=n+1\).
\item The \(a_i\) are pairwise distinct.
\end{enumerate}
\end{prop}

Note that by Proposition \ref{compinterprop} the assumptions on the cohomology ring are satisfied by any complete intersection of odd complex dimension.
The proposition can be shown by adapting the classical arguments for cohomology complex projective spaces with circle action (cf. e.g. \cite[VII, Th. 5.1]{Br72} or \cite[Th. IV.3]{Hs75}) to the situation above. Here we will give a proof based on the localization formula (\ref{loc formula}).

\bigskip
\begin{proof} Let us recall that one always has $b_{\ev}(M^{S^1})\leq b_{\ev}(M)$. This follows from an inspection of the spectral sequence for the Borel construction $M_{S^1}\to BS^1$ (cf. for example  \cite[VII, Th. 2.2]{Br72}). Here we have $b_{\ev}(M^{S^1})=b_{\ev}(M)=n+1$ since by Lemma \ref{specseq lemma} the spectral sequence degenerates (cf. \cite[VII, Th. 1.6]{Br72}). By the last lemma, $b_{\ev}(M^{S^1})=\sum _i b_{\ev}(Z_i)\geq \sum _i (r_i+1)\geq  n+1=b_{\ev}(M)$. Thus we get $r_i=n_i$, $b_{\ev}(Z_i)=n_i+1$ and  $H^{\ev}(Z_i;\mathbb{Q})\cong \mathbb{Q}[x_{\vert Z_i}]/(x_{\vert Z_i}^{n_i+1}))\cong H^{\ev}(\C P^{n_i};\Q )$. This proves the first three statements. For the last statement assume to the contrary that there exist components $Z_s\neq Z_t$ with $a_s=a_t$. For the other components $Z_i$, $i\not\in \{s,t\}$, let us choose lifts $\xi_j $, $j=1,\ldots , n-n_s-n_t-1$, of $x$ such that for each $i$ at least $n_i+1$ of the $\xi_j$'s have the property that there restriction to $Z_i$ is equal to $x_{\vert Z_i}$. Let $\xi$ be the lift of $x$ such that the restriction to $Z_s$ and $Z_t$ is equal to $x_{\vert Z_s}$ and $x_{\vert Z_t}$, respectively. Applying the localization formula to $\xi ^{n_s+n_t+1}\prod_j \xi_j $ we get using Lemma \ref{lemma 4.1} the contradiction $[x^n]_M=0$. Hence, the \(a_i\) are pairwise distinct.\end{proof}
\section{Preliminaries for the proof of Theorem \ref{maintechnicaltheorem}}\label{section prelim}

Let $M$ be a smooth orientable closed $6$-dimensional manifold with torsion-free homology,
$b_1(M)=0$ and $b_2(M)=1$.

Let $x$ be a generator of $H^2(M;\Z )$. The manifolds we are
interested in fulfill the following conditions:

\begin{enumerate}
\item The Euler characteristic of $M$ satisfies $\chi (M)<4$.
\item $p_1(M)=\rho \cdot x^2$ with $\rho \leq 0$.
\item $x^3\neq 0$.
\end{enumerate}
We fix the orientation of $M$ such that $t:=[x^3]_M>0$. Here $[\quad ]_N$ denotes, as before, evaluation on the
fundamental cycle of an oriented closed manifold $N$.

Note that $b_3(M)$ is even, since the intersection form  is
skew-symmetric. If $M$ is simply connected, then by the structure
result of Wall \cite{Wa66} $M$ is diffeomorphic to the connected sum
of a twisted complex projective space (with twist number $t$) and
$b_3(M)/2$ copies of $S^3\times S^3$.

We now assume that $M$ admits a smooth effective $S^1$-action. In the following we identify $\Zp m$ with the cyclic subgroup of order $m$ in $S^1$. We use the shorthand
notation $b_{\odd}=\sum _{2k+1}b_{2k+1}$ and $b_{\ev}=\sum
_{2k}b_{2k}$ for the odd and even Betti numbers, respectively.

\begin{prop}\label{bettiprop} $b_{\ev}(M^{S^1})=b_{\ev}(M)=4$. For a prime $p$ and \(l>0\),
$$b_{\ev}(M^{\Zp {p^l}})=\rank \, H^{\ev}(M^{\Zp {p^l}};\Zp p)=\rank \, H^{\ev}(M;\Zp p)=4.$$
\end{prop}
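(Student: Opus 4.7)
The plan is to first settle the $S^1$-fixed-point statement directly from Lemma \ref{specseq lemma}, and then bootstrap to the $\Zp{p^l}$-case by combining Smith theory with a second application of Borel localization, this time to the induced $S^1$-action on $M^{\Zp{p^l}}$.

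To start, $b_{ev}(M)=4$: since $M$ is oriented and connected with torsion-free homology and $H^2(M;\Z)=\langle x\rangle$, Poincar\'e duality yields $b_0=b_6=1$ and $b_2=b_4=1$, and torsion-freeness also gives $\rank H^{ev}(M;\Zp p)=4$ via the universal coefficient theorem. By Lemma \ref{specseq lemma}, $H^*_{S^1}(M;\R)$ is free over $H^*(BS^1;\R)=\R[u]$ with $|u|=2$, so the Borel localization theorem yields an isomorphism of $\Z/2$-graded $\R(u)$-modules
\[H^*_{S^1}(M;\R)\otimes_{\R[u]}\R(u)\;\cong\;H^*(M^{S^1};\R)\otimes_\R\R(u).\]
Comparing ranks in even degree gives $b_{ev}(M^{S^1})=b_{ev}(M)=4$, while comparing total ranks gives $b(M^{S^1})=b(M)$.

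For the $\Zp{p^l}$-case, set $G=\Zp{p^l}$ and observe that since $S^1$ is abelian its action restricts to $M^G$, with $(M^G)^{S^1}=M^{S^1}$. Combining the Borel inequality over $\R$ applied to the $S^1$-action on $M^G$, the universal coefficient theorem for $M^G$, and Smith's inequality for the $p$-group $G$ acting on the $p$-torsion-free manifold $M$, I obtain the chain
\[b(M^{S^1})\;\leq\;b(M^G)\;\leq\;\rank H^*(M^G;\Zp p)\;\leq\;\rank H^*(M;\Zp p)\;=\;b(M).\]
The previous paragraph gave $b(M^{S^1})=b(M)$, so all four inequalities are equalities.

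From this equality chain I read off two structural consequences. First, $b(M^G)=\rank H^*(M^G;\Zp p)$ forces $H^*(M^G;\Z)$ to be $p$-torsion-free by UCT. Second, $b((M^G)^{S^1})=b(M^G)$ is precisely $S^1$-equivariant formality of $M^G$ over $\R$, which via the parity-refined localization isomorphism yields $b_{ev}(M^G)=b_{ev}((M^G)^{S^1})=b_{ev}(M^{S^1})=4$. Combining these two gives $\rank H^{ev}(M^G;\Zp p)=b_{ev}(M^G)=4$, which together with the opening computation $\rank H^{ev}(M;\Zp p)=4$ delivers all the claimed equalities. The crux of the plan is this pinching argument: rather than trying to prove degeneration of a new Borel spectral sequence for the $G$-action or for the $S^1$-action on $M^G$, one \emph{forces} $S^1$-equivariant formality of $M^G$ from the sandwich $b(M)\leq b(M^G)\leq b(M)$, and parity-preservation of the Borel localization then delivers the even-degree statements.
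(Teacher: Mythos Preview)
Your proof is correct and follows the same overall ``pinching'' strategy as the paper: sandwich $M^{\Zp{p^l}}$ between $M^{S^1}$ and $M$, use that the outer two have matching (even) Betti numbers, and force all inequalities to be equalities. The execution differs in a noteworthy way. The paper works with \emph{even-degree} inequalities throughout, invoking the even-degree refinement of the Smith inequality $\rank H^{ev}(Z^{\Zp p};\Zp p)\leq \rank H^{ev}(Z;\Zp p)$ (cited from Bredon), and introduces an auxiliary large prime $q$ with $M^{\Zp q}=M^{S^1}$ so as to replace the $S^1$-localization step by a finite-group Smith inequality. You instead run the sandwich with \emph{total} Betti numbers and the ordinary Smith inequality, and then recover the even-degree statements a posteriori: equality in $b(M^G)=\rank H^*(M^G;\Zp p)$ gives $p$-torsion-freeness of $H^*(M^G;\Z)$, and equality in $b(M^{S^1})=b(M^G)$ gives $S^1$-equivariant formality of $M^G$, whence $b_{ev}(M^G)=b_{ev}(M^{S^1})$ by the $\Z/2$-grading of the localization isomorphism. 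Your route avoids both the auxiliary prime and the specific even-degree Smith theorem, at the price of the two extra structural deductions; the paper's route is shorter once one is willing to quote Bredon's parity-refined inequality.
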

The following type of argument is well-known and only included for completeness and for the convenience of the reader.
\begin{proof} By Lemma \ref{specseq lemma} the spectral sequence for $M\hookrightarrow
M_{S^1}\to BS^1$ degenerates at the $E_2$-level which implies $b_{\ev}(M^{S^1})=b_{\ev}(M)=4$ (cf. \cite[Th. 1.6, Th. 2.1, p. 374-375]{Br72} or apply Proposition \ref{structure prop}).

Note that $S^1$ acts on $M^{\Zp {p^l}}$ and $(M^{\Zp {p^l}})^{S^1}=M^{S^1}$.
We fix a large prime $q$ such that the action of $\Zp q\subset S^1$
satisfies $M^{\Zp q}=M^{S^1}$, $b_i(M)=\rank \, H^{i}(M;\Zp q)$,
$b_i(M^{\Zp {p^l}})=\rank \, H^{i}(M^{\Zp {p^l}};\Zp q)$ and
$b_i(M^{S^1})=\rank \, H^{i}(M^{S^1};\Zp q)$.

Recall from \cite[Th. 2.2, p. 376-377]{Br72} that for any smooth
$S^1$-manifold $Z$ and any prime $p$ one has the inequality $\rank
\, H^{\ev}(Z^{\Zp p};\Zp p)\leq \rank \, H^{\ev}(Z;\Zp p)$.
By induction we have
$\rank
\, H^{\ev}(Z^{\Zp {p^l}};\Zp p)\leq \rank \, H^{\ev}(Z;\Zp p)$.
 Recall
also that $b_i(X)\leq  \rank \, H^{i}(X;\Zp p)$ for any space $X$.
As an application of these properties one obtains
$$b_{\ev}(M^{S^1})=\rank \, H^{\ev}(M^{S^1};\Zp q )=\rank \, H^{\ev}((M^{\Zp {p^l}})^{\Zp q};\Zp q )$$
$$\leq \rank \,  H^{\ev}(M^{\Zp {p^l}};\Zp q )=b_{\ev}(M^{\Zp {p^l}})$$
$$\leq \rank \,  H^{\ev}(M^{\Zp {p^l}};\Zp p )\leq \rank \,  H^{\ev}(M;\Zp p ).$$
Since the homology of $M$ is torsion-free $\rank \,H^{\ev}(M;\Zp p
)=\rank \,H^{\ev}(M;\Z)=b_{\ev}(M)=4$. Since $b_{\ev}(M^{S^1})=4$ all
inequalities in the display formula above are in fact equalities. In
particular,
$$b_{\ev}(M^{\Zp {p^l}})=\rank \, H^{\ev}(M^{\Zp {p^l}};\Zp p)=\rank \, H^{\ev}(M;\Zp p)=4.$$
\end{proof}

\bigskip
\noindent
\begin{cor}\label{orientcor}
For any prime $p$ and \(l>0\) the fixed point manifold $M^{\Zp {p^l}}$ is
orientable.
\end{cor}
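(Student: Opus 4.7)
The plan is to handle the cases of odd $p$ and $p=2$ separately, since the direct geometric argument that works for odd primes breaks down for $p=2$.

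For odd $p$ the argument is purely local. Every element of $\Zp{p^l}$ has odd order, hence preserves the orientation of $M$: for a diffeomorphism $\sigma$ of finite odd order $k$ one has $(\det d\sigma)^k = 1$ with $\det d\sigma = \pm 1$, forcing $\det d\sigma = +1$. At a fixed point $x$, the eigenvalues of the differential of a generator on $T_x M$ are $p^l$-th roots of unity; since $-1$ is not one of them when $p^l$ is odd, the nontrivial eigenvalues occur in complex conjugate pairs. The $+1$-eigenspace is tangent to the fixed component through $x$, while its complement decomposes into isotypic subbundles each carrying a canonical $\Zp{p^l}$-invariant complex structure. This makes the normal bundle of each component of $M^{\Zp{p^l}}$ into a complex vector bundle, hence orientable; combined with the orientation of $M$, this orients every component.

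For $p=2$ this fails, since the involution in $\Zp{2^l}$ can contribute a $-1$-eigenspace of odd dimension whose associated subbundle of the normal bundle need not be orientable. Instead I would invoke Proposition~\ref{bettiprop}, which yields $b_{ev}(M^{\Zp{2^l}}) = \rank H^{ev}(M^{\Zp{2^l}}; \Zp 2)$. Writing out the universal coefficient theorem summed over even degrees shows that this equality is equivalent to the vanishing of all $2$-torsion in $H^*(M^{\Zp{2^l}}; \Z)$. On the other hand, any closed non-orientable $d$-manifold $N_0$ has $H_d(N_0;\Z)=0$ and a $\Z/2$ summand in $H_{d-1}(N_0;\Z)$ (detected by the Bockstein applied to the mod-$2$ fundamental class), so by universal coefficients $H^d(N_0;\Z)$ contains a $\Z/2$ summand. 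A non-orientable component of $M^{\Zp{2^l}}$ would therefore introduce $2$-torsion into $H^*(M^{\Zp{2^l}}; \Z)$, contradicting the conclusion above, and hence each component must be orientable.

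The main obstacle is the $p=2$ case: no direct argument using a complex structure on the normal bundle is available, and one is forced to translate the numerical conclusion of Proposition~\ref{bettiprop} into a statement about integral $2$-torsion via the universal coefficient theorem and then use this to exclude non-orientable components.
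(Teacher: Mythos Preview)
Your proof is correct and follows the same overall strategy as the paper: the odd-$p$ case is handled identically (complex structure on the normal bundle), and the $p=2$ case is reduced to the equality $b_{ev}(M^{\Zp{2^l}})=\rank H^{ev}(M^{\Zp{2^l}};\Zp 2)$ from Proposition~\ref{bettiprop}.

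The only difference is in how that equality is exploited. The paper first observes that each component $X\subset M^{\Zp{2^l}}$ is even-dimensional (since $\Zp{2^l}\subset S^1$ acts orientation preserving, the $(-1)$-eigenspace of the generator at a fixed point has even dimension), say $\dim X=2k$; then the degree-by-degree inequality $b_{2i}(X)\le \rank H^{2i}(X;\Zp 2)$ together with the global equality forces $b_{2k}(X)=\rank H^{2k}(X;\Zp 2)=1$, which immediately gives orientability. Your route instead sums the universal-coefficient identity $\rank H^i(\,\cdot\,;\Zp 2)=b_i+r_i+r_{i+1}$ over even $i$ to deduce that \emph{all} $2$-torsion in $H^*(M^{\Zp{2^l}};\Z)$ vanishes, and then notes that a non-orientable closed $d$-manifold contributes a $\Zp 2$ in $H^d$. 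This is a mild repackaging of the same idea; it has the small bonus of not needing the even-dimensionality observation, while the paper's version is a bit more direct in that it pinpoints exactly the top-degree Betti number.
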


\begin{proof} If $p$ is an odd prime this is trivial since the action of $\Zp {p^l}$
 on the normal bundle of the fixed point manifold induces a complex structure.
 If $p=2$ the claim follows from Proposition \ref{bettiprop}. Let $X$ be a connected component of $M^{\Zp
 {2^l}}$. Since $\Zp {2^l}$ acts orientation
 preserving $X$
 is even-dimensional, say $\dim X= 2k$. Since $b_{2i}(X)\leq  \rank \, H^{2i}(X;\Zp
 2)$ and $b_{\ev}(M^{\Zp {2^l}})=\rank \, H^{\ev}(M^{\Zp {2^l}};\Zp 2)$ by Proposition
 \ref{bettiprop} we see that $b_{2k}(X)=\rank \, H^{2k}(X;\Zp
 2)=1$. Hence, $X$ is orientable.
\end{proof}

\begin{remark}\label{orientrem}
Suppose $F$ is a connected component of the fixed point manifold $M^{\Zp
 n}$, $n>1$. Then one can apply the corollary above for a prime power dividing $n$ to see that $F$ is orientable.
\end{remark}

Next we recall the classical results for the Euler
characteristic and signature of $S^1$-manifolds. By the Lefschetz-fixed point formula
for the Euler characteristic
\begin{equation}\label{eulereq}\chi (M)=\chi
(M^{S^1})=\sum _{Z\subset M^{S^1}} \chi (Z),\end{equation} where the
sum runs over the connected components $Z$ of $M^{S^1}$. An analogous formula holds for the signature if one chooses orientations correctly. Recall that the $S^1$-action induces a complex structure and an orientation on the
normal bundle $\nu _Z$ of $Z$. For later reference we remark that
with respect to this complex structure on $\nu _Z$ the
normal $S^1$-weights at $Z$ are all positive. We
equip $Z$ with the orientation which is compatible with the
orientations of $\nu _Z$ and $M$. It
follows from the rigidity of the equivariant signature (see (A.10)) that
$$sign(M)=sign(M^{S^1})=\sum _{Z\subset
M^{S^1}} sign(Z).$$

Note that if we replace the $S^1$-action by the inverse action (by
composing the action with the isomorphism $S^1\to S^1$, $\lambda
\mapsto \lambda ^{-1}$), the orientation of $Z$ will change if and
only if $Z$ has codimension $\equiv 2 \bmod 4$, i.e. if the dimension of $Z$ is $0$ or $4$. In order to simplify the
discussion we will make the following

\begin{convention}\label{convention} If the $S^1$-action has at least one isolated fixed
point, then we choose one of them, denoted by $pt$, and replace the
action by the inverse action, if necessary, so that $pt$ has
positive orientation.
\end{convention}

Next consider the complex line bundle $\gamma $ over $M$ with $c_1(\gamma )=x$.
Since $b_1(M)=0$ we may lift the action to $\gamma $ (cf. \cite{HaYo76},
Corollary 1.3). We first consider a fixed lift of the $S^1$-action. Let
$Z$ be a connected component of $M^{S^1}$. At a point in $Z$ the
fiber of $\gamma$ is a complex one-dimensional $S^1$-representation whose
isomorphism type is constant on $Z$. We denote the weight of this
representation by $a_Z\in \Z $. For any $l\in \Z$ we can choose a
lift of the $S^1$-action to $\gamma $ such that the $S^1$-equivariant
first Chern class at the connected components is given by
$\{x_{\vert Z}+(a_Z+l)\cdot z\, \mid \, Z\subset M^{S^1}\}$ (see Section \ref{localization section}).

Theorem \ref{maintechnicaltheorem} will follow from a case by
case study of the possible $S^1$-fixed point configurations. Since
$\lbrack x^3\rbrack _M\neq 0$ one gets from the localization
formula (\ref{loc formula}) that $M^{S^1}$ is not empty (see also A.1 in the appendix). Since the Euler
characteristic of $M$ is $<4$ the case of isolated $S^1$-fixed
points cannot occur (see Proposition \ref{bettiprop} and equation
(\ref{eulereq})). Recall that any connected component of $M^{S^1}$ is an
oriented submanifold of even codimension. By Proposition \ref{structure prop} and Proposition \ref{bettiprop} the rational cohomology of each component is in even degrees isomorphic to the one of a complex projective space. Thus, we are left with the following three cases:
\begin{itemize}
\item $M^{S^1}$ is the disjoint union of a connected $4$-dimensional manifold $N$, with $H^{\ev}(N;\Q )\cong H^{\ev}(\C P^2;\Q )$, and a point $pt$.
\item $M^{S^1}$ is the disjoint union of two connected surfaces,  $M^{S^1}=X\cup Y$.
\item $M^{S^1}$ is the disjoint union of a connected surface $X$ and two points $pt$ and $q$.
\end{itemize}

We will show in the next section that none of these cases can occur.

\section{Proof of Theorem \ref{maintechnicaltheorem}}\label{sectionmaintechnicaltheorem}

\subsection{Four-dimensional fixed point components}

The next lemma will be used in the proof.
\begin{lemma}\label{4dimlemma} Let $F\subset M$ be an oriented submanifold of
codimension $2$ and $\gammaneu \cdot x\in H^2(M;\Z)$ its Poincar\'e-dual. Then:
\begin{enumerate}
\item $p_1(F)=(\rho -\gammaneu ^2)\cdot (x_{\vert F})^2$.
\item If $sign (F)=0$, then $(x_{\vert F})^2=0$, $\gammaneu =0$ and the Euler class of the normal bundle of $F\hookrightarrow M$ vanishes.
\end{enumerate}
\end{lemma}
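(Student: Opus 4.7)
The plan is to prove both statements by a direct cohomological computation. Part (1) is a normal-bundle calculation on $F$; part (2) combines (1) with the Hirzebruch signature theorem and Poincar\'e duality in $M$, together with the sign constraints $\rho \leq 0$ and $t>0$.

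For part (1), the normal bundle $\nu$ of $F\hookrightarrow M$ is an oriented real rank-$2$ bundle, and its Euler class is the restriction of the Poincar\'e-dual of $F$, so $e(\nu) = \gamma\cdot x_{\vert F}$. From the splitting $TM_{\vert F}\cong TF\oplus \nu$ and the Whitney formula for Pontrjagin classes, which in degree $4$ reads $p_1(TM)_{\vert F} = p_1(F) + p_1(\nu)$, together with the identity $p_1(\nu) = e(\nu)^2$ valid for oriented real rank-$2$ bundles, I rearrange to obtain $p_1(F) = (\rho-\gamma^2)\cdot x_{\vert F}^2$.

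For part (2), I apply the Hirzebruch signature theorem, $sign(F) = \frac{1}{3}[p_1(F)]_F$, together with part (1) to get $sign(F) = \frac{1}{3}(\rho-\gamma^2)\cdot [x_{\vert F}^2]_F$. The number $[x_{\vert F}^2]_F$ is evaluated via the projection formula: since $i_*[F]$ is Poincar\'e-dual in $M$ to $\gamma x$, one has $[x_{\vert F}^2]_F = [\gamma x\cdot x^2]_M = \gamma\cdot t$. Hence $sign(F) = 0$ together with $t>0$ forces $(\rho-\gamma^2)\gamma = 0$. The sign constraints now kick in: $\rho\leq 0$ and $\gamma^2\geq 0$ make the case $\rho = \gamma^2$ impossible unless both vanish, so in either branch $\gamma = 0$. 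With $\gamma = 0$ the Poincar\'e-dual of $F$ vanishes, whence its restriction $e(\nu) = \gamma\cdot x_{\vert F}$ is zero, and $[x_{\vert F}^2]_F = \gamma\cdot t = 0$; assuming (or arguing component-by-component that) $F$ is connected, $H^4(F;\R)\cong \R$ and therefore $x_{\vert F}^2 = 0$ as claimed.

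The computation is essentially routine; the only point requiring a little care is matching the integer $\gamma$ (defined through Poincar\'e duality on $M$) with both the normal Euler class of $F\hookrightarrow M$ and with the evaluation $[x_{\vert F}^2]_F$, so that the positive factor $t = [x^3]_M$ can legitimately be cancelled when extracting the vanishing $\gamma = 0$.
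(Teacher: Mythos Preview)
Your proof is correct and follows essentially the same route as the paper's: both use the normal-bundle identity $p_1(M)_{\vert F}=p_1(F)+e(\nu)^2$ with $e(\nu)=\gamma\cdot x_{\vert F}$ for part (1), and for part (2) combine the signature theorem, the identity $[(x_{\vert F})^2]_F=\gamma\cdot t$, and the sign constraints $\rho\le 0$, $t>0$ to force $\gamma=0$. The only cosmetic difference is that the paper phrases the vanishing as $p_1(F)=0$ (as a class) before invoking $[(x_{\vert F})^2]_F=\gamma t$, while you first pass to the evaluation $(\rho-\gamma^2)\gamma t=0$; both versions implicitly rely on $F$ being connected to upgrade $[(x_{\vert F})^2]_F=0$ to $(x_{\vert F})^2=0$, which you rightly flag and which holds in every application of the lemma in the paper.
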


\begin{proof} Consider the normal bundle $\nu $ of $F$ in $M$ equipped with the
orientation compatible with the orientations of $F$ and $M$. Then the Euler class
of $\nu$ is equal to $\gammaneu \cdot (x_{\vert F})$ (where $x_{\vert F}$ denotes the restriction of $x$ to $F$) and
$$\rho \cdot (x_{\vert F})^2=p_1(M)_{\vert F}=p_1(F)+\gammaneu ^2\cdot (x_{\vert F})^2.$$
This shows the first statement.

For the second statement,
note that $sign(F)=0$ implies
$$(\rho -\gammaneu ^2)\cdot (x_{\vert F})^2=p_1(F)=0.$$
Also
$$[(x_{\vert F})^2]_F=[(\gammaneu \cdot x) \cdot x^2]_M=t\cdot \gammaneu .$$
Since $\rho \leq 0$ and $t\neq 0$ these two identities imply $\gammaneu
=0$ and $(x_{\vert F})^2=0$.
\end{proof}

\bigskip
\noindent
We now discuss the case involving a $4$-dimensional connected $S^1$-fixed point component, i.e. $M^{S^1}$ is the disjoint union of a connected
$4$-dimensional manifold $N$, with $H^{\ev}(N;\Q )\cong H^{\ev}(\C
P^2;\Q )$, and a point $pt$. By Convention \ref{convention} $pt$ has positive
orientation. Since $0=sign(M)=sign(M^{S^1})=sign(pt) +sign(N)$ the
signature of $N$ is $-1$ (see (A.10)). Hence, $[x_{\vert N}^2]_N\leq
0$ and $[p_1(N)]_N=-3$ by the signature theorem. From Lemma \ref{4dimlemma} (1) one now obtains the contradiction
$-3=(\rho -\gammaneu ^2)\cdot [x_{\vert N}^2]_N\geq 0$. Hence, $M$ does not support a smooth $S^1$-action with a $4$-dimensional fixed point component.

\subsection{Two $2$-dimensional fixed point components}

In this subsection we discuss the case that $M^{S^1}$ is the disjoint
union of two connected surfaces,  $M^{S^1}=X\cup Y$. As before let
$\gamma $ be the complex line bundle over $M$ with $c_1(\gamma)=x$. We fix a
lift of the $S^1$-action to $\gamma $ such that $S^1$ acts trivially on
the fibers of $\gamma $ over $Y$ (i.e. $a_Y$ vanishes). Since any other
lift differs by a global weight the $S^1$-weights at the connected
components $X$ and $Y$ for a general lift are of the form $a_X+l$
and $l$, respectively, where $l\in \Z $ depends on the choice of the
lift.

Let $x_{Z,1}$ (resp. $y_{Z,i} + n_{Z,i}\cdot z$) denote the
tangential root (resp. normal roots) at a component $Z\subset
M^{S^1}$. We note that by Proposition \ref{structure prop} $[x_{\vert X}]_X\neq 0$ and $[x_{\vert Y}]_Y\neq 0$.

To prove the non-existence of an $S^1$-action with $M^{S^1}=X\cup Y$
we first assume that the $S^1$-action is semi-free around $X$ and
$Y$, i.e. we assume $n_{X,1}=n_{X,2}=n_{Y,1}=n_{Y,2}=1$. In this
case the $S^1$-action on the normal bundles of $X$ and $Y$ coincides
with complex multiplication by $S^1\subset \C $ and the normal
bundles of $X$ and $Y$ each split off a trivial complex line bundle
on dimensional grounds. Hence, we may assume $y_{X,2}=y_{Y,2}=0$,
i.e. the normal weights at $X$ (resp. $Y$) are $\{y_{X,1} + z, z \}$
(resp. $\{y_{Y,1} + z, z \}$).

We first compute $[x^3]_M$ locally. By formula (\ref{eq:13})
$$t=[x^3]_M=-(a_X+l)^3\cdot [y_{X,1}]_X+3(a_X+l)^2\cdot
[x_{\vert X}]_X-l^3\cdot [y_{Y,1}]_Y+3\cdot l^2\cdot
[x_{\vert Y}]_Y .$$
The left hand side is constant in $l$ which gives the relations
$$[y_{Y,1}]_Y=-[y_{X,1}]_X,\quad a_X [y_{X,1}]_X=[x_{\vert X}]_X + [x_{\vert Y}]_Y,\quad a_X([x_{\vert X}]_X - [x_{\vert Y}]_Y)=0$$
$$\text{and } t=[x^3]_M=a_X^2(-a_X\cdot [y_{X,1}]_X+3\cdot
[x_{\vert X}]_X).$$
Next we compute $[p_1(M)\cdot x]_M$ locally. By formula (\ref{eq:17})
$$\rho \cdot t=[p_1(M)\cdot x]_M=2([x_{\vert X}]_X + [x_{\vert Y}]_Y).$$
This leads to
$$a_X(\frac {\rho \cdot t}2-2 \cdot [x_{\vert X}]_X)=-a_X([x_{\vert X}]_X - [x_{\vert Y}]_Y)=0\text{ and }$$
$$t=a_X^2(-a_X\cdot [y_{X,1}]_X+3\cdot
[x_{\vert X}]_X)=a_X^2(-\frac {\rho \cdot t}2+3[x_{\vert X}]_X ).$$
Hence,
$$a_X\neq 0,\quad \rho \cdot t/2=2 \cdot [x_{\vert X}]_X \text{ and }$$
$$t=a_X^2(-\frac {\rho \cdot t}2 + \frac 3 2 \cdot \frac {\rho \cdot t}2)=a_X^2(\frac 1 4\cdot \rho \cdot t).$$
Since $t>0$ and $\rho \leq 0$ the last equation gives a contradiction. Hence, the action on $M$ cannot be semi-free around $X$ and
$Y$.

Next assume the action is not semi-free around $X$ and $Y$. Then
there exists a prime $p$ and a $4$-dimensional connected component
$F\subset M^{\Zp p}$ which contains one of the connected components
of $M^{S^1}$, say $X$. Note that $b_2(F)\geq 1$ and $\rank \,
H^{2}(F;\Zp p)\geq 1$ since $[x_{\vert X}]_X\neq 0$.

\begin{lemma} $X\cup Y\subset F$
\end{lemma}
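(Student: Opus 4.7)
The statement $X \cup Y \subset F$ splits into two inclusions. The first, $X \subset F$, holds by construction, since $F$ was chosen as the $4$-dimensional component of $M^{\Zp p}$ containing $X$. So the task reduces to proving $Y \subset F$, and the plan is to derive a contradiction from the bound $b_{ev}(M^{\Zp p}) = 4$ established in Proposition \ref{bettiprop}, by lower-bounding the even Betti numbers of two distinct components of $M^{\Zp p}$.

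First I would establish that $b_{ev}(F) \geq 3$. By Corollary \ref{orientcor}, $F$ is a closed orientable $4$-manifold, and since it is connected, $b_0(F) = b_4(F) = 1$. The remark preceding the lemma gives $b_2(F) \geq 1$ (indeed $[x_{\vert X}]_X \neq 0$ from Lemma \ref{x lemma} forces $x_{\vert F}$ to be non-zero in $H^2(F;\R)$ via restriction to $X \subset F$). Hence $b_{ev}(F) \geq 3$.

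Now suppose, for contradiction, that $Y \not\subset F$. Since $Y$ is connected and $Y \subset M^{S^1} \subset M^{\Zp p}$, it lies in some connected component $F' \neq F$ of $M^{\Zp p}$. Because $\Zp p$ acts by orientation-preserving diffeomorphisms, $\dim F'$ is even; moreover $2 \leq \dim F' \leq 4$ (the value $6$ is ruled out because the proper component $F$ is disjoint from $F'$, so $F' \neq M$). I would then distinguish two cases. If $\dim F' = 2$, then $Y \subset F'$ with both being connected closed $2$-manifolds, so $F' = Y$ and $b_{ev}(F') = 2$. If $\dim F' = 4$, then $F'$ is again closed and orientable by Corollary \ref{orientcor}, so $b_0(F') = b_4(F') = 1$; the restriction map $H^2(F';\R) \to H^2(Y;\R)$ sends $x_{\vert F'}$ to $x_{\vert Y}$, which is non-zero because Lemma \ref{x lemma} yields $[x_{\vert Y}]_Y \neq 0$, exhibiting $x_{\vert Y}$ as a non-torsion class in $H^2(Y;\Z) \cong \Z$. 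Hence $b_2(F') \geq 1$ and $b_{ev}(F') \geq 3$.

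In either case $b_{ev}(F) + b_{ev}(F') \geq 5$. Since $F$ and $F'$ are disjoint connected components of $M^{\Zp p}$, their even Betti numbers contribute additively, giving $b_{ev}(M^{\Zp p}) \geq 5$, which contradicts Proposition \ref{bettiprop}. Therefore $Y \subset F$, and together with $X \subset F$ this proves the lemma. The main subtlety is the $4$-dimensional case for $F'$, where one must upgrade the numerical non-vanishing $[x_{\vert Y}]_Y \neq 0$ to non-triviality of $x_{\vert F'}$ in real cohomology; restricting to $Y$ together with the fact that $H^2(Y;\Z) \cong \Z$ has no torsion handles this, and the remainder is routine Betti number bookkeeping.
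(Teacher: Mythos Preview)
Your proof is correct and follows the same strategy as the paper: assume $Y$ lies in a different component of $M^{\Zp p}$ and contradict the bound from Proposition~\ref{bettiprop}. The paper's version is terser---it works with $\Zp p$-coefficients directly and simply asserts $\rank H^{ev}(F;\Zp p) + \rank H^{ev}(\widetilde F;\Zp p) \geq 5$, whereas you use real Betti numbers and carry out an explicit case analysis on $\dim F'$; both are covered by Proposition~\ref{bettiprop}. Your extra work showing $b_2(F') \geq 1$ in the $4$-dimensional case via $[x_{\vert Y}]_Y \neq 0$ is a valid refinement but not needed, since $b_0(F') + b_4(F') = 2$ already suffices for the contradiction.
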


\begin{proof}
Suppose $Y$ is contained in a connected component $\widetilde F$ of
$M^{\Zp p}$ which is different from $F$. Then $\rank \,
H^{\ev}(M^{\Zp p};\Zp p)\geq \rank \, H^{\ev}(F;\Zp p) + \rank \,
H^{\ev}(\widetilde F;\Zp p)\geq 5$ which contradicts Proposition
\ref{bettiprop}. Hence, $X\cup Y\subset F$.
\end{proof}

By Corollary \ref{orientcor} $F$ is orientable. We fix an
orientation for $F$. Note that the signature of $F$ vanishes since
$sign(F)=\pm sign(X)\pm sign (Y)=0$ (see (\ref{eq:22})). By Lemma
\ref{4dimlemma} the Euler class of the normal bundle of $F$
vanishes. From the spectral sequence for $\pi :F_{S^1}\to BS^1$ we
conclude that the equivariant Euler class is in the image of
$\pi^*$. This implies that the $S^1$-weights of the normal bundle at
$X$ and $Y$ coincide. Since the normal bundle of $F\subset M$
restricted to $X$ (resp. $Y$) is a summand of the normal bundle of
$X\subset M$ (resp. $Y\subset M$) we may assume that the normal
roots of $X$ (resp. $Y$) are given by $\{y_{X,1}+n_{X,1}\cdot
z,n_{X,2}\cdot z\}$ (resp. $\{y_{Y,1}+n_{Y,1}\cdot z,n_{X,2}\cdot
z\}$), i.e. $y_{X,2}=y_{Y,2}=0$ and $n_{X,2}=n_{Y,2}$.

To show that a smooth non-trivial $S^1$-action does not exist we
will first compute the $S^1$-equivariant signature with the
Lefschetz fixed point formula of Atiyah-Bott-Segal-Singer (see A.2 in the
appendix for details).

By the rigidity of the signature the $S^1$-equivariant signature of
$M$ is zero. Using the Lefschetz fixed point formula we get (see
formula (\ref{eq:21})):
$$0=\frac {1+\lambda ^{n_{X,2}}}{1-\lambda ^{n_{X,2}}}\cdot \frac {\lambda ^{n_{X,1}}}{(1-\lambda ^{n_{X,1}})^2}   \cdot [y_{X,1}]_X + \frac {1+\lambda ^{n_{X,2}}}{1-\lambda ^{n_{X,2}}}\cdot \frac {\lambda ^{n_{Y,1}}}{(1-\lambda ^{n_{Y,1}})^2}   \cdot [y_{Y,1}]_Y.$$
By expanding the right hand side around $\lambda =0$ one sees that
either $[y_{X,1}]_X=0=[y_{Y,1}]_Y$ or $[y_{Y,1}]_Y=-[y_{X,1}]_X\neq
0$ and $n_{X,1}=n_{Y,1}$.

If $[y_{X,1}]_X=0=[y_{Y,1}]_Y$, then an inspection of the
localization of $[x^3]_M$ using formula (\ref{eq:13}) gives the
contradiction $0\neq t=[x^3]_M=0$.

If $[y_{Y,1}]_Y=-[y_{X,1}]_X\neq 0$, formula (\ref{eq:13}) gives the relations
$$[x_{\vert X}]_X=[x_{\vert Y}]_Y,\quad \frac {a_X}{n_{X,1}}\cdot [y_{X,1}]_X=2\cdot [x_{\vert X}]_X\text{ and } t=\frac {a_X^2}{n_{X,1}\cdot n_{X,2}}\cdot [x_{\vert X}]_X.$$
In particular, $[x_{\vert X}]_X$ is positive.

Assuming these relations the localization formula (\ref{eq:17}) for $[p_1(M)\cdot x]_M$ leads to
$$\rho \cdot t=4\cdot \frac {n_{X,1}\cdot [x_{\vert X}]_X}{n_{X,2}}.$$
Now $t>0$, $\rho\leq 0$, $[x_{\vert X}]_X>0$, $n_{X,i}>0$ gives the desired contradiction.
Hence, $M$ does not support a smooth $S^1$-action with $M^{S^1}=X\cup Y$.

\subsection{One $2$-dimensional fixed point component and two isolated fixed points}
In this subsection we discuss the remaining case that $M^{S^1}$ is the
disjoint union of a connected surface $X$ and two points $pt$ and
$q$. After dividing out the ineffective kernel we may assume that the $S^1$-action is effective.

By our convention $pt$ has positive orientation. Since the
signature of $M$ is equal to the sum of the signatures of the
$S^1$-fixed point components (see (\ref{eq:22})) the orientation
$\epsilon _q$ of the fixed point $q$ is $-1$.

Let \(n_{X,1},n_{X,2}>0\) be the local weights at \(X\),
\(n_{pt,1},n_{pt,2},n_{pt,3}>0\) the local weights at \(pt\) and
\(n_{q,1},n_{q,2},n_{q,3}>0\) the local weights at \(q\). Since
\(S^1\) acts effectively on \(M\), we have
\(\gcd(n_{X,1},n_{X,2})=\gcd(n_{pt,1},n_{pt,2},n_{pt,3})=\gcd(n_{q,1},n_{q,2},n_{q,3})=1\).

We fix a lift of the \(S^1\)-action on \(M\) into the line bundle
\(\gamma \) with \(c_1(\gamma )=x\) such that the weight \(a_X\) of the
\(S^1\)-representation on the fibers of \(\gamma \) over \(X\) is zero. Recall from Proposition \ref{structure prop} that the restriction of $x$ to $X$ is non-zero.

\begin{lemma}
\label{sec:case-where-ms1=f2-1}
  Let \(F\) be the component of \(M^{\Zp {n_{X,1}}}\) which contains \(X\).
  Then \(F\) contains both isolated fixed points. Moreover, \(F\) is orientable.
\end{lemma}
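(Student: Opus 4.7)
My approach has three parts: (1) determine $\dim F$; (2) derive orientability from Corollary \ref{orientcor}; (3) rule out either $pt\notin F$ or $q\notin F$ by combining the Betti-number identity from Proposition \ref{bettiprop} with the signature-Pontrjagin relation of Lemma \ref{4dimlemma}.

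If $n_{X,1}=1$ then $\Zp{n_{X,1}}$ is trivial, $F=M$, and the lemma is obvious; so assume $n_{X,1}>1$. Since $\gcd(n_{X,1},n_{X,2})=1$, only the weight-$n_{X,1}$ summand of the normal bundle of $X$ is fixed by $\Zp{n_{X,1}}$; thus $F$ has codimension $2$ in $M$ and $\dim F=4$. A key preliminary observation is that for any prime $p$ dividing $n_{X,1}$, writing $p^l$ for the maximal $p$-power dividing $n_{X,1}$, the component $F'$ of $M^{\Zp{p^l}}$ containing $X$ coincides with $F$: both are $4$-dimensional (the weight-divisibility calculation at $X$ is identical), $F\subset F'$, and a connected $4$-dimensional submanifold of a connected $4$-dimensional manifold is the whole thing. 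Hence $F$ is a $4$-dimensional component of $M^{\Zp{p^l}}$, and Corollary \ref{orientcor} gives its orientability.

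For (3), suppose without loss of generality $q\notin F$, and let $G_q$ denote the component of $M^{\Zp{n_{X,1}}}$ containing $q$. By Lemma \ref{sec:case-where-ms1=f2}, $x_{\vert X}\neq 0$; via $X\hookrightarrow F\hookrightarrow M$ this gives $x_{\vert F}\neq 0$ in $H^2(F;\R)$ and hence $b_2(F)\geq 1$. Fix a prime $p$ dividing $n_{X,1}$. The standard inequality $b_{ev}(Y^{S^1})\leq b_{ev}(Y)$ for $S^1$-spaces, combined with Proposition \ref{bettiprop}, yields
\[b_{ev}(F)+b_{ev}(F_{p,pt})+b_{ev}(F_{p,q})+\cdots = b_{ev}(M^{\Zp{p^l}})=4,\]
where $F_{p,pt},F_{p,q}$ are the components of $M^{\Zp{p^l}}$ containing $pt,q$ (and may or may not coincide with $F$), together with $b_{ev}(F)\geq 2+|F\cap\{pt,q\}|$ and $b_{ev}(F_{p,pt}),b_{ev}(F_{p,q})\geq 1$. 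Since $q\notin F$, $F_{p,q}\neq F$. If additionally $pt\notin F$, then $F_{p,pt}\neq F$, and the count forces $b_{ev}(F)\leq 2$, giving $b_2(F)=0$ and contradicting $b_2(F)\geq 1$. Hence $pt\in F$, and we are reduced to the subcase $pt\in F$, $q\notin F$, in which the count forces $b_{ev}(F)=3$, $b_2(F)=1$, and $F_{p,q}=\{q\}$.

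In this final subcase $x_{\vert F}$ generates $H^2(F;\R)$; Lemma \ref{4dimlemma}(1) gives $p_1(F)=(\rho-\gamma_F^{2})\,x_{\vert F}^2$, and the signature theorem together with $S^1$-rigidity yields
\[3\,sign(F)=(\rho-\gamma_F^{2})\,\gamma_F\,t,\qquad sign(F)\in\{\pm 1\}.\]
I then compute $[x^3]_M$ and $[p_1(M)\cdot x]_M$ via equivariant localization at $M^{S^1}=X\cup\{pt\}\cup\{q\}$ using the lift with $a_X=0$, and match the coefficients of the resulting polynomial identities in the free parameter $l$ against the signature relation above. The divisibility constraints $n_{X,1}\mid n_{pt,i}$ for two values of $i$ (since $pt\in F$) and $n_{X,1}\nmid n_{q,j}$ for every $j$ (since $F_{p,q}=\{q\}$ holds for every prime $p$ dividing $n_{X,1}$) sharply restrict the admissible local data and produce the desired contradiction. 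The main obstacle is this final algebraic matching, where the interplay between the signature-Pontrjagin identity and the localization relations must be carefully tracked together with the divisibility constraints on the weights.
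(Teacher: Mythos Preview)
Your parts (1) and (2) are fine and match the paper's approach (the paper works with $M^{\Zp p}$ for a prime $p\mid n_{X,1}$ rather than $M^{\Zp{p^l}}$, but this is immaterial since the relevant components agree, as you observe).

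The gap is in part (3). You correctly arrive at $b_2(F)=1$, $sign(F)\in\{\pm1\}$, and the identity
\[
3\,sign(F)=(\rho-\gamma_F^{2})\,\gamma_F\,t,
\]
but you then abandon this relation and promise a localization computation that you never carry out. That promised argument is not a proof: the localization formulas for $[x^3]_M$ and $[p_1(M)\cdot x]_M$ involve many unknowns (the $n_{pt,i}$, $n_{q,j}$, $a_{pt}$, $a_q$, $[y_{X,i}]_X$), and the divisibility constraints you cite do not by themselves force a contradiction. Indeed, later in the paper the localization identities are only brought to a contradiction \emph{after} one knows the weights at $pt$ and $q$ agree, a fact whose proof uses the very lemma you are trying to establish.

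What you are missing is that the contradiction is already in your hands. Since $b_2(F)=1$ and $x_{\vert F}\neq 0$ generates $H^2(F;\R)$, the sign of the intersection form equals the sign of $[x_{\vert F}^2]_F=\gamma_F\,t$; as $t>0$ this gives $sign(F)=sign(\gamma_F)$. Dividing your displayed identity by $sign(F)$ yields
\[
3=(\rho-\gamma_F^{2})\,|\gamma_F|\,t\le 0,
\]
a contradiction. This is precisely the paper's argument (phrased via Lemma~\ref{4dimlemma}), and it makes your preliminary case split ``$pt\notin F$ vs.\ $pt\in F$'' unnecessary: once $M^{\Zp p}$ is disconnected one gets $b_2(F)=1$ directly from the Betti count, and the signature--Pontrjagin relation finishes uniformly.
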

\begin{proof} If $n_{X,1}=1$ the statement is trivially true. So let $n_{X,1}\geq 2$ and let \(p\) be a prime divisor of \(n_{X,1}\).
  Because \(\gcd(n_{X,1},n_{X,2})=1\),  \(F\) is the component of \(M^{\Zp p}\) which contains \(X\).
Moreover, \(F\) has dimension four. By Corollary \ref{orientcor}
\(F\) is orientable. By Proposition~\ref{bettiprop}, we have \(\rank \, H^{\ev}(M^{\Zp p};\Zp p)=4\).
By Proposition \ref{structure prop} and Poincar\'e duality, we have \(\rank \, H^{\ev}(F;\Zp p)\geq 3\).
Therefore, if \(M^{\Zp p}\) is disconnected, it is the union of \(F\) and a component \(F'\) with \(b_{\ev}(F')= \rank H^{\ev}(F';\Zp p)=1\).

Assume that \(M^{\Zp p}\) is disconnected. It follows from the above
discussion that \(sign\, (F) =\pm 1\). This implies \([p_1(F)]_F=\pm
3\) and \([x_{|F}^2]_F=\pm \alpha\) with \(\alpha\geq 0\). By Lemma
\ref{4dimlemma}, we get the contradiction
$$3=\pm[p_1(F)]_F=\pm(\rho-\gammaneu ^2)[x_{|F}^2]_F=(\rho-\gammaneu ^2)\alpha\leq
0.$$ Therefore \(F=M^{\Zp p}\) is connected.
\end{proof}

\begin{lemma}
\label{sec:case-where-ms1=f2-2}
  Let \(F\) be the connected fixed point component of the $\Zp {n_{pt,1}}$-action on $M$ which contains \(pt\).
  Then \(F\) also contains \(q\) and is orientable.
\end{lemma}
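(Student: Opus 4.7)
The plan is to mirror Lemma~\ref{sec:case-where-ms1=f2-1}, with the role of the surface $X$ now played by the isolated fixed point $pt$. If $n_{pt,1}=1$ then $F=M$ and the conclusion is immediate, so I will assume $n_{pt,1}\geq 2$. The local dimension of $F$ at $pt$ equals $2\#\{i:n_{pt,1}\mid n_{pt,i}\}$; since $\gcd(n_{pt,1},n_{pt,2},n_{pt,3})=1$ while $n_{pt,1}\mid n_{pt,1}$, this is either $2$ or $4$. For the orientability of $F$: writing $n_{pt,1}=2^a m$ with $m$ odd, $M^{\Zp{n_{pt,1}}}=(M^{\Zp{2^a}})^{\Zp{m}}$ is the $\Zp{m}$-fixed set of the orientable manifold $M^{\Zp{2^a}}$ (Corollary~\ref{orientcor}), with invariant complex structure on the normal bundle (since $\Zp{m}\subseteq S^1$ and $m$ is odd), so $F$ is orientable.

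In the case $\dim F=2$, $F$ is a closed orientable surface. Because $X$ is connected and $F$ is a component of $M^{\Zp{n_{pt,1}}}$, either $X\cap F=\emptyset$ or $X\subseteq F$; the latter forces $X=F$ by equality of dimension and connectedness, contradicting $pt\in F\setminus X$. Hence $F^{S^1}\subseteq\{pt,q\}$, and the Lefschetz fixed point formula gives $\chi(F)=|F^{S^1}|$. If $q\notin F$ then $\chi(F)=1$, which is impossible for a closed orientable surface; so $q\in F$.

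In the case $\dim F=4$, after relabeling we may assume $n_{pt,1}\mid n_{pt,2}$ while $n_{pt,1}\nmid n_{pt,3}$. For any prime $p\mid n_{pt,1}$, $p$ divides $n_{pt,1}$ and $n_{pt,2}$, and $p\nmid n_{pt,3}$ (using $\gcd=1$); so the component $F_p$ of $M^{\Zp{p}}$ containing $pt$ is $4$-dimensional at $pt$, and since $F\subseteq F_p$ is open-and-closed in the connected $F_p$, $F=F_p$. I will then rerun the argument of Lemma~\ref{sec:case-where-ms1=f2-1} on $F_p$: signature rigidity gives $\mathrm{sign}(F_p)=\mathrm{sign}(F_p^{S^1})$, and since $F_p^{S^1}\subseteq X\cup\{pt,q\}$ contributes $\mathrm{sign}(X)=0$, $\mathrm{sign}(pt)=+1$ (by Convention~\ref{convention}), and $\mathrm{sign}(q)=-1$ when $q\in F_p$, we obtain $\mathrm{sign}(F_p)=1-\mathbf{1}[q\in F_p]$. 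If $q\notin F_p$, then $\mathrm{sign}(F_p)=\pm 1$, and Lemma~\ref{4dimlemma} combined with $\rho\leq 0$ yields a contradiction, exactly as at the end of the proof of Lemma~\ref{sec:case-where-ms1=f2-1}; hence $q\in F_p=F$.

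I expect the signature-$p_1$ contradiction in the case $\dim F=4$ to be the main technical obstacle, since it hinges on the careful orientation bookkeeping already carried out in Lemma~\ref{sec:case-where-ms1=f2-1}; by contrast, the case $\dim F=2$ reduces to a clean Euler-characteristic parity argument once orientability of $F$ is in hand.
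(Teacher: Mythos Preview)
Your proof is correct, but it takes a somewhat different route from the paper's.

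For orientability, your factorization $n_{pt,1}=2^a m$ with $m$ odd and the observation that $M^{\Zp{n_{pt,1}}}=(M^{\Zp{2^a}})^{\Zp m}$ is cleaner than the paper's argument, which introduces $k=\gcd(n_{pt,1},n_{pt,2})$, $k'=\gcd(n_{pt,1},n_{pt,3})$, writes $n_{pt,1}=c_1 k k'$, and then runs a three-case analysis on the parity of $n_{pt,1}$ and the value of $c_1$.

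For the inclusion $q\in F$, the paper does \emph{not} split by $\dim F$. Instead it argues uniformly: since $F$ is orientable and $S^1$ cannot act on an orientable closed manifold with exactly one fixed point, $F^{S^1}$ must contain $X$ or $q$. If $X\subseteq F$, then (since $pt\in F$ forces $\dim F\geq 4$) some normal weight $n_{X,i}$ is divisible by $n_{pt,1}$, and the component of $M^{\Zp{n_{X,i}}}$ containing $X$ lies inside $F$; Lemma~\ref{sec:case-where-ms1=f2-1} then places $q$ in that component, hence in $F$. This reduction to Lemma~\ref{sec:case-where-ms1=f2-1} avoids redoing the signature--$p_1$ computation. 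Your approach is more hands-on: in the $2$-dimensional case you use a clean Euler-characteristic parity argument, and in the $4$-dimensional case you identify $F$ with a $\Zp p$-component and rerun the contradiction from Lemma~\ref{sec:case-where-ms1=f2-1} directly.

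One small expository wrinkle: your assertion that $\mathrm{sign}(pt)=+1$ ``by Convention~\ref{convention}'' is not literally justified, since the Convention fixes the orientation of $pt$ relative to $M$, not relative to $F_p$ (whose orientation you have chosen independently). However, you immediately retreat to $\mathrm{sign}(F_p)=\pm 1$ when $q\notin F_p$, which is all that is needed; and the implicit use of $b_2(F_p)=1$ in the final step (to conclude that $[x|_{F_p}^2]_{F_p}$ and $[p_1(F_p)]_{F_p}$ have the same sign) is justified by the same Betti-number count as in Lemma~\ref{sec:case-where-ms1=f2-1}, once one notes that $q\notin F_p$ forces $X\subseteq F_p$ (else $F_p^{S^1}=\{pt\}$, which is impossible).
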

\begin{proof}
If \(F\) is orientable then it contains \(X\) or \(q\), because there is no
orientable manifold which admits an \(S^1\)-action with exactly one
fixed point (see A.2 in the appendix).
In the first case \(n_{pt,1}\) divides a local weight at \(X\), say \(n_{X,1}\).
Then \(F\) contains also the component of \(M^{\Zp {n_{X,1}}}\) which contains \(X\).
Therefore it follows from Lemma~\ref{sec:case-where-ms1=f2-1} that \(F\) also contains \(q\).

In the following we will show that \(F\) is always orientable.
Let   \(k=\gcd(n_{pt,1},n_{pt,2})\) and \(k'=\gcd(n_{pt,1},n_{pt,3})\).
Note that
\begin{equation*}
  \gcd(k,n_{pt,3})=\gcd(k',n_{pt,2})=\gcd(n_{pt,1},n_{pt,2},n_{pt,3})=1.
\end{equation*}
Therefore there are
\(c_1,c_2,c_3\in \mathbb{Z}\) such that
\begin{align*}
n_{pt,1}&=c_1kk', &n_{pt,2}&=c_2k, &n_{pt,3}&=c_3k'.
\end{align*}

If \(n_{pt,1}\) is odd or \(c_1>2\), then the normal bundle of \(F\) admits a complex structure.
Therefore, \(F\) is orientable in this case.

If \(c_1=2\), then there is some \(l>0\) such that \(F\) is a component of \(M^{\Zp {2^l}}\).
Therefore it is orientable by Corollary~\ref{orientcor}.

Hence, we may assume that \(c_1=1\), \(k\) is even and \(k'\) is odd.
Then \(F\) is a component of \((M^{\Zp 2})^{\Zp {k'}}\).
Therefore it is orientable by Corollary~\ref{orientcor}.
\end{proof}

\begin{lemma}
  The normal weights at \(pt\) and \(q\) are equal up to ordering.
\end{lemma}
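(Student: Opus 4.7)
The plan is to deduce the equality of the weight multisets at $pt$ and $q$ from Lemma~\ref{sec:case-where-ms1=f2-2} (applied symmetrically to each weight at both isolated fixed points), combined with an elementary combinatorial argument.

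First, I observe that the indexing of the weights $n_{pt,1},n_{pt,2},n_{pt,3}$ (and similarly at $q$) is arbitrary, and the proof of Lemma~\ref{sec:case-where-ms1=f2-2} goes through with $n_{pt,1}$ replaced by any $n_{pt,i}$ (invoking Lemma~\ref{sec:case-where-ms1=f2-1} with $n_{X,2}$ in place of $n_{X,1}$ where needed). By swapping the roles of $pt$ and $q$, the same holds for each $n_{q,i}$. Hence, for every $i\in\{1,2,3\}$, the connected component $F_i^{pt}$ of $M^{\Zp{n_{pt,i}}}$ containing $pt$ is orientable and also contains $q$, and symmetrically the component $F_i^q$ of $M^{\Zp{n_{q,i}}}$ containing $q$ is orientable and contains $pt$.

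Now I count dimensions. At any $S^1$-fixed point $p\in F_i^{pt}$, the tangent space $T_pF_i^{pt}$ is the $\Zp{n_{pt,i}}$-fixed subspace of $T_pM$, which is the sum of those one-dimensional $S^1$-weight summands whose weight is divisible by $n_{pt,i}$. Connectedness of $F_i^{pt}$ forces
$$\#\{j : n_{pt,i}\mid n_{pt,j}\}=\#\{j : n_{pt,i}\mid n_{q,j}\},$$
and analogously $\#\{j : n_{q,i}\mid n_{q,j}\}=\#\{j : n_{q,i}\mid n_{pt,j}\}$ for each $i$. Setting $A=\{n_{pt,1},n_{pt,2},n_{pt,3}\}$ and $B=\{n_{q,1},n_{q,2},n_{q,3}\}$ as multisets of positive integers, this means $\#\{a\in A : N\mid a\}=\#\{b\in B : N\mid b\}$ for every $N\in A\cup B$.

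Finally, I invoke the elementary fact that this divisibility condition forces $A=B$: the maximal element $N_0\in A\cup B$ satisfies $\#\{a\in A : N_0\mid a\}=$ (multiplicity of $N_0$ in $A$) and likewise for $B$, so $N_0$ occurs with equal multiplicity in $A$ and $B$; removing these copies preserves the hypothesis on the smaller multisets, and one concludes by induction on $|A|+|B|$. The main obstacle lies in the first step: recognizing that Lemma~\ref{sec:case-where-ms1=f2-2} must be applied to each weight at \emph{both} fixed points, rather than only to $n_{pt,1}$ as literally stated. After this observation, the proof is purely combinatorial bookkeeping and requires no further equivariant cohomology or index computation.
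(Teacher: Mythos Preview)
Your proof is correct and is a genuinely different route from the paper's. The paper proceeds by a case analysis on the divisibility relations among the three weights at $pt$, establishing two intermediate claims (one for a weight dividing none of the others, one for a weight dividing exactly one other) and then working through three configurations. You instead extract the single uniform consequence of Lemma~\ref{sec:case-where-ms1=f2-2}: for each $N$ in either weight multiset, the number of multiples of $N$ is the same on both sides. The combinatorial lemma you invoke (peel off the maximum, induct) then finishes the argument cleanly.

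What your approach buys is uniformity and brevity: there is no need to track which weights divide which, and the argument would scale to longer weight tuples without change. What the paper's approach buys is concreteness---it identifies, in each case, exactly which weight at $q$ matches which weight at $pt$, which is occasionally useful information (though not needed downstream here). Your remark that Lemma~\ref{sec:case-where-ms1=f2-2} and Lemma~\ref{sec:case-where-ms1=f2-1} are symmetric in the weight indices and in the roles of $pt$ and $q$ is correct and is the only point where one must look back at those proofs rather than just their statements.
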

\begin{proof}
  Up to ordering there are the following three cases:
  \begin{enumerate}
  \item \(n_{pt,1}|n_{pt,2}\), \(n_{pt,3}\nmid n_{pt,2}\), \(\gcd(n_{pt,1},n_{pt,3})=1\), \(n_{pt,1}\neq 1\),
  \item \(n_{pt,1}|n_{pt,2}\), \(n_{pt,3}|n_{pt,2}\), \(\gcd(n_{pt,1},n_{pt,3})=1\), \(n_{pt,1}\neq 1\),
  \item if \(n_{pt,i}\neq 1\), then we have, for \(j\neq i\), \(n_{pt,i}\nmid n_{pt,j}\).
  \end{enumerate}
Before we consider these cases we prove the following two claims.

\emph{Claim 1:} If \(n_{pt,i_1}\nmid n_{pt,i_2}\) and  \(n_{pt,i_1}\nmid n_{pt,i_3}\), then there is exactly one \(j\in \{1,2,3\}\) such that \(n_{pt,i_1}|n_{q,j}\).
Moreover, we have \(n_{pt,i_1}=n_{q,j}\).

By Lemma~\ref{sec:case-where-ms1=f2-2}, the component of \(M^{\Zp {n_{pt,i_1}}}\) which contains \(pt\) also contains \(q\).
Moreover, this component has dimension two.
Therefore \(n_{pt,i_1}\) divides exactly one of the local weights at \(q\), say \(n_{q,j}\).
Again by Lemma~\ref{sec:case-where-ms1=f2-2} the component of \(M^{\Zp {n_{q,j}}}\) which contains \(q\) also contains \(pt\).
Hence, \(n_{q,j}\) divides one of the local weights at \(pt\).
It follows from the assumptions in the claim that this weight must be \(n_{pt,i_1}\).
Therefore \(n_{pt,i_1}=n_{q,j}\) follows. This proves Claim 1.

\emph{Claim 2:} If \(n_{pt,i_1}|n_{pt,i_2}\) and \(n_{pt,i_1}\nmid n_{pt,i_3}\), then there are exactly two \(j_1,j_2\in\{1,2,3\}\) such that \(n_{pt,i_1}|n_{q,j_1}\) and \(n_{pt,i_1}|n_{q,j_2}\).
Moreover, we have \((n_{pt,i_1},n_{pt,i_2})=(n_{q,j_1},n_{q,j_2})\) up to ordering.

By Lemma~\ref{sec:case-where-ms1=f2-2}, the component of \(M^{\Zp {n_{pt,i_1}}}\) which contains \(pt\) also contains \(q\).
Moreover, this component has dimension four.
Therefore \(n_{pt,i_1}\) divides exactly two of the local weights at \(q\), say \(n_{q,j_1}\) and \(n_{q,j_2}\).

At first assume \(n_{pt,i_1}\neq n_{pt,i_2}\).
Then, by Claim 1, applied to \(n_{pt,i_2}\), we know that exactly one of these weights is equal to \(n_{pt,i_2}\).
Denote this weight by \(n_{q,j_2}\).
By Lemma~\ref{sec:case-where-ms1=f2-2} the component of \(M^{\Zp {n_{q,j_1}}}\) which contains \(q\) also contains \(pt\).
Therefore \(n_{q,j_1}\) divides \(n_{pt,i_1}\) or \(n_{pt,i_2}\).
In the second case this component has dimension four.
Hence, \(n_{q,j_1}\) divides also \(n_{pt,i_1}\).
This implies \(n_{pt,i_1}=n_{q,j_1}\).

Now assume that \(n_{pt,i_1}=n_{pt,i_2}\).
Then, by Lemma~\ref{sec:case-where-ms1=f2-2}, the component of  \(M^{\Zp {n_{q,j_1}}}\) which contains \(q\) also contains \(pt\).
Therefore \(n_{q,j_1}\) divides \(n_{pt,i_1}\).
Hence, \(n_{q,j_1}=n_{pt,i_1}\) and by the same argument \(n_{pt,i_1}=n_{q,j_2}\).
This proves the second claim.

Now consider the three cases mentioned above.
In the first case the statement of the lemma follows from Claim 1 applied to \(n_{pt,i_1}=n_{pt,3}\) and Claim 2 applied to \(n_{pt,i_1}=n_{pt,1}\).

In the situation of the second case at first assume that \(n_{pt,3}\neq1\).
Then the statement of the lemma follows from Claim 2 applied to both \(n_{pt,i_1}=n_{pt,1}\) and \(n_{pt,i_1}=n_{pt,3}\).
Now assume that \(n_{pt,3}=1\). Then it follows from Claim 2 applied to \(n_{pt,i_1}=n_{pt,1}\) that there are two weights \(n_{q,1}\) and \(n_{q,2}\) such that \((n_{pt,1},n_{pt,2})=(n_{q,1},n_{q,2})\) up to ordering.
If we assume that \(n_{q,3}\neq 1\), we get from Claim 1 or Claim 2 applied to \(n_{pt,i_1}=n_{q,3}\) that there is a \(n_{pt,j}\) which is equal to \(n_{q,3}\).
This leads to a contradiction because only two of the local weights at \(q\) are divisible by \(n_{pt,1}\).
Therefore the local weights at \(pt\) and \(q\) are the same up to ordering.

In the third case first apply Claim 1 to all \(n_{pt,i}\neq 1\) to show that each of these weights is equal to exactly one local weight at \(q\).
As in the previous case it follows from an application of the Claims 1 and 2 to the local weights at \(q\) that \(\#\{i;\; n_{q,i}=1\}=\#\{i;\; n_{pt,i}=1\}\).
Therefore the lemma follows in this case.
\end{proof}

\begin{lemma}
\label{sec:one-2-dimensional}
  The case \((n_{pt,1},n_{pt,2},n_{pt,3})=(n_{q,1},n_{q,2},n_{q,3})\) does not occur.
\end{lemma}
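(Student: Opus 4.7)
The plan is to expand both localization identities for $[x^3]_M$ and $[p_1(M)\cdot x]_M$ as polynomial identities in the free parameter $l$ of the lift of the $S^1$-action to $L$, and then extract information by comparing coefficients. Because $\epsilon_q=-1$ and the normal weights at $pt$ and $q$ coincide, the combined contribution of these two isolated points to $[x^3]_M$ is $((a_{pt}+l)^3-(a_q+l)^3)/N$ with $N:=n_{pt,1}n_{pt,2}n_{pt,3}$, a polynomial in $l$ of degree at most $2$. On the surface $X$, the relations $y_{X,1}y_{X,2}=x|_X\cdot y_{X,i}=0$ in $H^*(X)$ truncate the local integrand after a short residue expansion, so the contribution from $X$ to $[x^3]_M$ works out to
\begin{equation*}
\frac{3[x|_X]_X\,l^2}{n_{X,1}n_{X,2}}-\frac{[y_{X,1}]_X\,l^3}{n_{X,1}^2 n_{X,2}}-\frac{[y_{X,2}]_X\,l^3}{n_{X,1}n_{X,2}^2}.
\end{equation*}
The coefficient of $l$ then forces $a_{pt}^2=a_q^2$; the case $a_{pt}=a_q$ would give $t=0$, contradicting $t>0$, so $a_{pt}=-a_q$. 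Setting $A:=a_{pt}$, the coefficients of $l^0,l^2,l^3$ give respectively $t=2A^3/N$ (so $A>0$), $[x|_X]_X=-2An_{X,1}n_{X,2}/N$, and $n_{X,2}[y_{X,1}]_X+n_{X,1}[y_{X,2}]_X=0$.

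Now I carry out the same expansion for $[p_1(M)\cdot x]_M=\rho t$. The combined $pt,q$ contribution is the $l$-independent constant $2A(n_{pt,1}^2+n_{pt,2}^2+n_{pt,3}^2)/N$, while the $l$-coefficient coming from $X$ simplifies to a positive rational multiple of $(n_{X,1}^2-n_{X,2}^2)(n_{X,2}[y_{X,1}]_X-n_{X,1}[y_{X,2}]_X)$. Together with the relation $n_{X,2}[y_{X,1}]_X+n_{X,1}[y_{X,2}]_X=0$, this splits into exactly two subcases: either $n_{X,1}=n_{X,2}=1$ (using $\gcd(n_{X,1},n_{X,2})=1$), or $n_{X,1}\neq n_{X,2}$ and $[y_{X,1}]_X=[y_{X,2}]_X=0$. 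In both subcases, matching the constant term produces the single formula
\begin{equation*}
\rho=\frac{n_{pt,1}^2+n_{pt,2}^2+n_{pt,3}^2-n_{X,1}^2-n_{X,2}^2}{A^2}.
\end{equation*}

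The final step, where $\rho\leq 0$ is used, is to contradict this formula by divisibility. In the semi-free subcase $n_{X,1}=n_{X,2}=1$, one would need $n_{pt,1}^2+n_{pt,2}^2+n_{pt,3}^2\leq 2$, impossible since each $n_{pt,i}\geq 1$. In the other subcase, suppose $n_{X,1}>1$; Lemma~\ref{sec:case-where-ms1=f2-1} applied to $M^{\Zp{n_{X,1}}}$ implies that exactly two of the $n_{pt,i}$ are divisible by $n_{X,1}$, giving $\sum n_{pt,i}^2\geq 2n_{X,1}^2+1$, which already exceeds $n_{X,1}^2+n_{X,2}^2$ when $n_{X,2}=1$. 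If both $n_{X,1},n_{X,2}>1$, applying the same lemma to $M^{\Zp{n_{X,2}}}$ and using that any two size-two subsets of $\{1,2,3\}$ overlap, at least one $n_{pt,i}$ is divisible by $n_{X,1}n_{X,2}$, so $\sum n_{pt,i}^2\geq(n_{X,1}n_{X,2})^2+2$, which still exceeds $n_{X,1}^2+n_{X,2}^2$ because $(n_{X,1}^2-1)(n_{X,2}^2-1)\geq 9$ for $n_{X,1},n_{X,2}\geq 2$. The main obstacle is the bookkeeping of the $l$-expansions and verifying that the $y_{X,i}$-contributions cancel as claimed; once the $\rho$-formula is established, the divisibility contradictions themselves are short.
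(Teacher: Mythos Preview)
Your proof is correct and follows essentially the same approach as the paper: both expand the localization formulas for $[x^3]_M$ and $[p_1(M)\cdot x]_M$ in $l$, deduce $a_{pt}=-a_q>0$ and $[x|_X]_X=-2a_{pt}n_{X,1}n_{X,2}/N$, arrive at the identity $\rho t = \tfrac{2a_{pt}}{N}\bigl(\sum n_{pt,i}^2 - n_{X,1}^2 - n_{X,2}^2\bigr)$, and then invoke the divisibility from Lemma~\ref{sec:case-where-ms1=f2-1} to force the right-hand side positive. Your detour through the $l$-coefficient of $[p_1(M)\cdot x]_M$ and the resulting subcase split on $[y_{X,i}]_X$ is harmless but unnecessary, since the $\rho$-formula comes directly from the constant term and does not involve the $y_{X,i}$ at all; on the other hand, you supply the explicit case analysis for the divisibility step that the paper leaves to the reader.
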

\begin{proof}
  Assume that we are in this case.
Since the lift of the \(S^1\)-action into \(\gamma \) is not unique we get
from the localization formulas for \([x^3]_M\) and \([p_1(M)\cdot
x]_M\) two polynomials in a variable \(l\) which are equal to
\([x^3]_M\) and \([p_1(M)\cdot x]_M\), respectively. By comparing
coefficients we get the following equations (see appendix):
  \begin{align}
\label{eq:4}
    0&=\frac{3}{n_{X,1}n_{X,2}}[x]_X + \frac{3a_{pt}}{n_{pt,1}n_{pt,2}n_{pt,3}} - \frac{3a_q}{n_{pt,1}n_{pt,2}n_{pt,3}},\\
\label{eq:5}
    0&= \frac{3a_{pt}^2}{n_{pt,1}n_{pt,2}n_{pt,3}} - \frac{3a_q^2}{n_{pt,1}n_{pt,2}n_{pt,3}},\\
\label{eq:6}
    [x^3]_M&= \frac{a_{pt}^3}{n_{pt,1}n_{pt,2}n_{pt,3}} - \frac{a_q^3}{n_{pt,1}n_{pt,2}n_{pt,3}} \quad \text{ and }\\
\label{eq:7}
    [p_1(M)\cdot x]_M&= \frac{n_{X,1}^2+n_{X,2}^2}{n_{X,1}n_{X,2}}[x]_X + (a_{pt}-a_q)\frac{n_{pt,1}^2+n_{pt,2}^2+n_{pt,3}^2}{n_{pt,1}n_{pt,2}n_{pt,3}}.
  \end{align}

Because of (\ref{eq:5}) we have \(a_{pt}=\pm a_q\).
Then (\ref{eq:6}) and \([x^3]_M>0\) implies \(a_{pt}=-a_q>0\).

>From (\ref{eq:4}) and (\ref{eq:7}) we get
\begin{equation*}
  [p_1(M)\cdot x]_M= -2a_{pt}\frac{n_{X,1}^2+n_{X,2}^2}{n_{pt,1}n_{pt,2}n_{pt,3}} + 2a_{pt}\frac{n_{pt,1}^2+n_{pt,2}^2+n_{pt,3}^2}{n_{pt,1}n_{pt,2}n_{pt,3}}.
\end{equation*}

Now, using the divisibility properties implied by
Lemma~\ref{sec:case-where-ms1=f2-1}, it follows that the right hand
side of this equation is always positive. This is a contradiction to
our assumption.
\end{proof}

By combining the above lemmas we see that there is no \(S^1\)-action
on \(M\) with a fixed point set consisting out of a two-dimensional
component and two isolated fixed points.

\section{Complete intersections with $T^2$-action}\label{finiteness section}
\label{sec:t2}

In this section we will extend the vanishing results for indices of twisted $Spin^c$-Dirac operators and twisted elliptic genera given in \cite{De99}, \cite[Section 4]{DeTop} to prove that there are only finitely many complete intersections which admit an effective action of a two-dimensional torus \(T^2\) in each odd complex dimension. As a corollary we exhibit in each odd complex dimension $\geq 3$ complete intersections with a metric of positive Ricci-curvature (in fact with positive first Chern class) but no effective action of a two-dimensional torus.  We also give a proof of the Petrie conjecture for \(T^2\)-actions in complex dimension $<12$.
The main new technical ingredient which is needed to prove these claims is the following lemma.

\begin{lemma}
  \label{sec:torus-acti-stab}
  Let \(T\) be a torus.
  Let \(M\) be a \(T\)-manifold with \(\rank T > b_2(M)\) and \(a\in H^4(M_T;\Q)\) such that the restriction of $a$ to $H^4(M;\Q)$ vanishes.
  Then there is a non-trivial homomorphism \(\rho(S^1,T): S^1\rightarrow T\) such that \(\rho(S^1,T)^*(a)\in \pi_{S^1}^*(H^4(BS^1;\Q))\).
\end{lemma}
Here $\pi_{S^1}: M_{S^1}\to BS^1$ is the projection in the Borel construction.
\begin{proof}
>From the Serre spectral sequence for the fibration \(M\rightarrow M_T\rightarrow BT\) we have the following direct sum decomposition of the \(\Q\)-vector space \(H^4(M_T;\Q)\),
  \begin{equation*}
    H^4(M_T;\Q)\cong E^{0,4}_\infty \oplus E^{2,2}_\infty \oplus E^{4,0}_\infty.
  \end{equation*}
Moreover, we have
\begin{align*}
  E^{0,4}_\infty &\subset H^{4}(M;\Q),& E^{2,2}_\infty &\subset E^{2,2}_2/d_2(E^{0,3}_2),& E^{4,0}_\infty&=\pi_{S^1}^*H^4(BT;\mathbb{Q}).
\end{align*}
Let \(a_{0,4}\), \(a_{2,2}\), \(a_{4,0}\) be the components of \(a\) according to this decomposition.
Then \(a_{0,4}=0\) by assumption.
Moreover, there is an \(\tilde{a}_{2,2}\in E_2^{2,2}\) such that \(a_{2,2}=[\tilde{a}_{2,2}]\).

Now it is sufficient to find a non-trivial homomorphism \(\rho(S^1,T): S^1\rightarrow T\) such that \(\rho(S^1,T)^*(\tilde{a}_{2,2})=0\).
We have the following isomorphisms:
\begin{align*}
  E_2^{2,2}&\cong H^2(BT;\mathbb{Q})\otimes H^2(M;\mathbb{Q})\\
  &\cong \left(H^2(BT;\Q)\right)^{b_2(M)}.
\end{align*}
Since \(\rank T > b_2(M)\) we can find a non-trivial homomorphism \(\phi: H^2(BT;\Q)\rightarrow H^2(BS^1;\Q)=\Q\) such that all components of \(\tilde{a}_{2,2}\) according to the above decomposition of \(E_2^{2,2}\) are mapped to zero by \(\phi\).
After scaling, we may assume that \(\phi\) is induced by a surjective homomorphism \(H^2(BT;\mathbb{Z})\rightarrow H^2(BS^1;\mathbb{Z})\).
By dualizing we get a homomorphism \(\hat{\phi}: H_2(BS^1;\mathbb{Z})\rightarrow H_2(BT;\mathbb{Z})\).
Since for any torus \(H_2(BT;\mathbb{Z})\) is naturally isomorphic to the integer lattice in the Lie algebra \(LT\) of \(T\), \(\hat{\phi}\) defines the desired homomorphism.
\end{proof}
We shall now combine this lemma with the methods developed in \cite{De99,DeTop} to study $T$-actions on certain manifolds with \(\rank T > b_2(M)\). Since we are mainly interested in application for manifolds which are cohomologically complete intersections we will restrict to the case that $b_2(M)=1$ and $\rank T=2$ (see \cite{Wi15} for other results).

Let $M$ be a $2n$-dimensional
$Spin^c$-manifold with $b_1(M)=0$ and $H^2(M;\Z )\cong \Z$ generated by $x$. Note that this situation applies to any integral cohomology $\C P^n$ and, also, to any integral cohomology complete intersection of complex dimension $n> 2$. Let $\gamma$ be the complex line bundle with $c_1(\gamma )=x$.

Suppose a torus $T^2$ of rank two acts effectively and smoothly on $M$. Then we can lift the $T^2$-action to the $Spin^c$-structure \cite[Lemma 2.1]{Wi13} and to $\gamma$ \cite[Corollary 1.2]{HaYo76}. We fix a lift to the $Spin^c$-structure. Note that for a given $T^2$-fixed point we may choose the lift to $\gamma $ such that the restriction of the line bundle to the point is a trivial $T^2$-representation.

Let us for a moment restrict the $T^2$-action to $S^1$ with respect to a homomorphism $S^1\to T^2$ and consider a connected $S^1$-fixed point component $Z$. The fiber of the normal bundle of $Z\subset M$ at a point $pt\in Z$ is a real $S^1$-representation. We denote its weights by $\pm  m_{Z,j}$. The fiber of $\gamma $ over $pt$ is a complex one dimensional $S^1$-representation. Its weight will be denoted by $a_Z$. Note that the normal weights $\pm  m_{Z,j}$ and $a_Z$ only depend on $Z$ but not on the chosen point $pt$.

Let \(Z_0,\dots, Z_{k_1}\) be the components of \(M^{S^1}\) and  \(n_i=\frac{1}{2}\dim Z_i\). We will choose the lift of the $T^2$-action to $\gamma $ such that the weight vanishes at one of the components, say $Z_0$, i.e.  $a_{Z_0}=0$.

\begin{prop}
\label{sec:compl-inters-with-2} Let $M$ and $\gamma $ be as above. Suppose $H^{\ev}(M;\mathbb{Q})=\mathbb{Q}[x]/(x^{n+1})$. If the equivariant first Pontrjagin class \(p_1(TM\oplus \bigoplus_{i=1}^k \gamma)_{S^1}$ is in $\pi_{S^1}^*(H^4(BS^1;\mathbb{Q}))\) then we have \(k<n\).
\end{prop}
\begin{proof} At first we replace the $S^1$-action by the two-fold action. Recall from Proposition \ref{structure prop} that  \(H^{\ev}(Z_i;\mathbb{Q})\cong H^{\ev}(\C P^{n_i};\mathbb{Q})\) and that \(\sum_i (n_i+1)=n+1\).

Let $V$ be the $S^1$-equivariant complex vector bundle given by
\begin{equation*}
  V= n_0 \gamma\oplus\bigoplus_{i=1}^{k_1} (n_i+1) \gamma\otimes_{\mathbb{C}}W_{-a_i},
\end{equation*}
where \(W_a\) denotes the one-dimensional unitary \(S^1\)-representation of weight \(a\) and \(a_i:=a_{Z_i}\). Let
$${\mathcal U}_V:=\bigotimes_{n=1}^\infty S_{q^n}(\widetilde {TM}\otimes _\R \C )\otimes
\Lambda _{-1}(V^*)\otimes \bigotimes_{n=1}^\infty \Lambda _{-q^n}(\widetilde
{V}\otimes _\R \C ).$$ Here $q$ is a formal variable,
$\widetilde E$ denotes the reduced vector bundle $E-\dim (E)$ and
$\Lambda _t:=\sum \Lambda ^i\cdot t^i$ (resp. $S_t:=\sum S^i\cdot t^i$) denotes the exterior
(resp. symmetric) power operation. We now consider the equivariant $Spin^c$-Dirac operator twisted with
${\mathcal U}_V$. Its index is a $q$-power series of virtual $S^1$-representations and will be denoted by
$$ind_{S^1}(\partial _{c}\otimes {\mathcal U}_V)\in R(S^1)[[ q ]].$$

Using the Atiyah-Singer index theorem one computes that the non-equivariant index $ind(\partial
_{c}\otimes {\mathcal U}_V)$ is equal to $[x^n]_M$ for $q=0$. In particular, $ind(\partial
_{c}\otimes {\mathcal U}_V)$ is non-zero. Using Proposition 3.1 of \cite{DeTop} one can show that the equivariant index $ind_{S^1}(\partial _{c}\otimes {\mathcal U}_V)$ converges for \(q=e^{2\pi i \tau}\), \(\tau\in \mathcal{H}\), and \(\lambda=e^{2\pi i \tilde{z}}\) a topological generator of \(S^1\subset \C\) to a holomorphic function \(f(\tau,\tilde{z})\) on the product \(\mathcal{H}\times \C\) where \(\mathcal{H}\) is the upper half plane (for details on this argument see the proof of Proposition 4.1  in \cite{DeTop}).
Moreover one can show that
\begin{equation*}
  f(\tau,\tilde{z})=e(\tilde{z})F(\tau,\tilde{z}),
\end{equation*}
where \(e(\tilde{z})\) is a holomorphic function and \(F\) is a holomorphic Jacobi function for \(SL_2(\mathbb{Z}) \ltimes \mathbb{Z}^2\) of index
\begin{equation*}
  I=\frac{1}{2}\left(\sum_{i=1}^{k_1}(n_i+1)a_i^2-\sum_{j=1}^n m_{Z_0,j}^2\right).
\end{equation*}
Note that \(I\) is an integer because we are looking at the two-fold action.

Since $ind(\partial
_{c}\otimes  {\mathcal U}_V)\neq 0$, $F$ does not vanish identically.
Because a holomophic Jacobi function with negative index vanishes identically we have
\begin{equation*}
  \sum_{j=1}^n m_{Z_0,j}^2 \leq \sum_{i=1}^{k_1}(n_i+1)a_i^2.
\end{equation*}

Next note that the restriction of $p_1(TM\oplus \bigoplus_{i=1}^k \gamma)_{S^1}$ to a point in $Z_i$ is equal to $(\sum_{j=1}^n m_{Z_i,j}^2 + k a_i^2)\cdot z^2\in H^4(BS^1;\Z)$ and is independent of the choice of $Z_i$ since
$$p_1(TM\oplus \bigoplus_{i=1}^k \gamma)_{S^1}\in \pi_{S^1}^*(H^4(BS^1;\mathbb{Q})) .$$
We may assume that \(a_1^2=\max_i\{a_i^2\}\). Then we get
\begin{equation*}
  \sum_{j=1}^n m_{Z_1,j}^2 + k a_1^2 =\sum_{j=1}^n m_{Z_0,j}^2\leq \sum_{i=1}^k (n_i+1)a_i^2 \leq n a_1^2.
\end{equation*}
Hence the claim follows.
\end{proof}

Let us call a smooth manifold $M$ with $H^*(M;\Z)\cong H^*(X_n(d_1,\dots,d_r);\Z )$ an {\em integral cohomology $X_n(d_1,\dots,d_r)$}. With the ingredients above we can prove the following theorem:

\begin{theorem}
\label{sec:compl-inters-with-3}
  Let $M$ be an integral cohomology $X_n(d_1,\dots,d_r)$ with \(n \geq 3 \) odd, $x$ a generator of $H^2(M;\Z)$ and $p_1(M)=-kx^2$. Suppose $M$ admits an effective action of a two-dimensional torus.
  Then we have \(k <n\).
\end{theorem}
\begin{proof}
  We may assume that  \(k> 0\). We lift the \(T^2\)-action into \(\gamma\) in such a way that the action on the fibers over one of the \(T^2\)-fixed points in \(M\) is trivial.

Note that \(p_1(TM\oplus \bigoplus_{i=1}^k \gamma)=0\).
  By Lemma~\ref{sec:torus-acti-stab}, there is a non-trivial homomorphism \(S^1\to T^2\) such that  \(p_1(TM\oplus \bigoplus_{i=1}^k \gamma)_{S^1}\in \pi_{S^1}^*(H^4(BS^1;\Q))\).
  Therefore the claim follows from Proposition~\ref{sec:compl-inters-with-2}.
\end{proof}

As an immediate corollary of the above theorem and Proposition \ref{compinterprop} we get:

\begin{corollary}\label{finiteness corollary}
  For each odd \(n\), there are, up to diffeomorphism, only finitely many complete intersection of complex dimension \(n\) which admit an effective action of a two-dimensional torus.
\end{corollary}

We also get the following corollary:

\begin{corollary}\label{Ricci Beispiele}
  For each \(m=4k+2\), \(k\geq 1\), there is a manifold \(M\) of dimension \(m\) which admits a metric of positive Ricci-curvature but no effective action of a two-dimensional torus.
\end{corollary}
\begin{proof}
  Let \(M=X_{2k+1}(d_1,\dots,d_r)\) with
$$ \sum_i d_i <(2k+1)+r+1\quad \text{ and }\quad  \sum_id_i^2\geq 2(2k+1)+r+1.$$
Then \(M\) is a K\"ahler manifold with positive first Chern-class.
Therefore \(M\) admits a K\"ahler metric with positive Ricci-curvature \cite{Ya78}.
But by Theorem~\ref{sec:compl-inters-with-3} there is no effective \(T^2\)-action on \(M\).
\end{proof}

In \cite{DeTop} it has been shown that if a homotopy complex projective space \(M\)  of dimension \(2n<24\) admits a non-trivial \(SU(2)\)-action with fixed point, then the Pontrjagin classes of \(M\) are standard.
Using Lemma~\ref{sec:torus-acti-stab} and arguments similar to the above discussion and the proofs in \cite{De99, DeTop} one can also prove the following theorem about homotopy complex projective spaces.

\begin{theorem}\label{Petrie theorem}
  Let \(M\) be homotopy equivalent to \(\C P^n\) with \(n<12\).
  If \(M\) admits an effective action of a two-dimensional torus, then the Pontrjagin classes of \(M\) are standard, i.e. $p(M)=(1+x^2)^{n+1}$ where \(x\in H^2(M;\mathbb{Z})\) is a generator of the cohomology ring of \(M\).
\end{theorem}
\begin{proof} Let \(b\in \mathbb{Z}\) be such that \(p_1(M)=b x^2\) and let \(\gamma\) be the line bundle over \(M\) with \(c_1(M)=x\). Then, by \cite[Corollary 1.3]{HaYo76}, the \(T^2\)-action on \(M\) lifts into \(\gamma\) in such a way that the \(T^2\)-action on the fiber of \(\gamma\) over a fixed point \(pt\in M^{T^2}\) is trivial.

By combining Proposition~\ref{sec:compl-inters-with-2} and Lemma \ref{sec:torus-acti-stab}, we have \(b>-n\). By combining Lemma~\ref{sec:torus-acti-stab} with the proof of Theorem 4.2 of \cite{De99}, we see that \(b\leq n+1\).
  Moreover, in case of equality we have \(p(M)=(1+x^2)^{n+1}\) (cf. {\it loc. cit.}).
  Since \(p_1(M)\mod 24\) is determined by the homotopy type of \(M\), it follows that for \(n<12\) the Pontrjagin classes of \(M\) are standard.
\end{proof}

\begin{appendix}

\section{Localization formulas for equivariant cohomology classes and equivariant signatures}

In the appendix we provide the local formulas for equivariant
cohomology classes and equivariant signatures which are used
in the proof.

Let $M$ be a closed smooth oriented $m$-dimensional manifold with smooth $S^1$-action. Let $Z$ be a connected component of the fixed point manifold $M^{S^1}$. We denote the tangential
formal roots of $Z$ by $\pm x_{Z,j}$. Hence, the total Pontrjagin class of $Z$ is given by $p(Z)=\prod _j(1+x_{Z,j}^2)$.

Let $\nu _Z$ be the normal bundle of $Z$. We orient $\nu _Z$ via the complex structure induced
by the $S^1$-action and fix the orientation for $Z$ which is compatible to the
orientation of $M$ and $\nu _Z$.

We denote the $S^1$-equivariant Euler class of the normal bundle
$\nu _Z$ by $e_{S^1}(\nu _Z)$. The normal bundle $\nu _Z$ decomposes
as a direct sum of complex vector bundles corresponding to the
$S^1$-representations. Applying the splitting principle to these we
can associate to $\nu _Z$ $S^1$-equivariant roots $y_{Z,i} +
n_{Z,i}\cdot z$, where the $y_{Z,i}$'s are non-equivariant formal
roots of the corresponding bundle, the weights $n_{Z,i}\in \Z $ are
positive by our convention, and $z$ is a formal variable which one
should think of as a fixed generator of the integral lattice of
$S^1$ or a fixed generator of $H^2(BS^1;\Z )$. With this notation
the $S^1$-equivariant Euler class of the normal bundle $\nu _Z$ is
given by $e_{S^1}(\nu _Z)=\prod _i(y_{Z,i} + n_{Z,i}\cdot z)$.

\subsection{Equivariant cohomology classes} Let $[\quad ]_M$ denote the push forwards $H^*(M_{S^1};\Z )\to H^{*-n}(pt_{S^1};\Z )=H^{*-n}(B{S^1};\Z )$ and $H^*(M;\Z )\to H^{*-n}(pt;\Z )$. Note that the first is also called {\em integration over the fiber} and the latter can also be described by evaluation on the
fundamental cycle of $M$. Let $\bar y\in H^*(M_{S^1};\Z )$ be an equivariant cohomology class and $y$ its image under the restriction map $H^*(M_{S^1};\Z )\to H^*(M;\Z )$. Then, by naturality, $[y]_M$ is equal to the zero degree part of $[\bar y]_M$. In particular, if $y$ is homogeneous of degree $m$ then $[y]_M$ can be computed by integrating its equivariant lift $\bar y$ over the fiber, i.e. $[y]_M=[\bar y]_M$. If $\bar y$ is homogeneous of degree $>m$ then $[y]_M$ vanishes for dimensional reasons but $[\bar y]_M$ may be non-trivial. This may also lead to interesting applications. However, we will only need the case $\deg y=m$.

By the localization formula (\ref{loc formula}) in equivariant cohomology of Atiyah-Bott and Berline-Vergne \cite{BeVe82, AtBo84} the class $[\bar y]_M$ can be computed in terms of local data at $M^{S^1}$:
$$ [y]_M=[\bar y]_M=\sum _{Z\subset M^{S^1}} \mu
(\bar y,Z).$$
We will now apply the localization formula for the $6$-dimensional manifold $M$ considered in Section \ref{section prelim} and to the classes $x^3$ and $p_1(M)\cdot x$. Note that $x$ can be lifted to an equivariant class since $b_1(M)=0$ and $p_1(M)$ lifts canonically to the first Pontrjagin class of the equivariant tangent bundle $TM_{S^1}\to M_{S^1}$.

Fix a lift of the $S^1$-action to the
complex line bundle $\gamma $ with $c_1(\gamma )=x$. Then at a connected
component $Z\subset M^{S^1}$ the $S^1$-equivariant first Chern class
of $\gamma $ has the form $x_{\vert Z}+a_Z\cdot z$.

The lift is not unique. For any $l\in \Z$ we can choose a lift of
the $S^1$-action such that the $S^1$-equivariant first Chern class
at the connected components is given by $\{x_{\vert Z}+(a_Z+l)\cdot
z\, \mid \, Z\subset M^{S^1}\}$.

The localization formula for $x^3$ with
respect to such a lift takes the form
  $$[x^3]_M=\sum _{Z\subset M^{S^1}} \mu
(x^3,Z),$$
where the local datum $\mu
(x^3,Z)$ at $Z$ is given by
$$\mu
(x^3,Z)=[(x_{\vert Z}+(a_Z+l)\cdot z)^3\cdot e_{S^1}(\nu _Z)^{-1}]_Z.$$ Note that the sum $\sum _{Z\subset M^{S^1}} \mu
(x^3,Z)$ is independent of the
parameter $l$.

\bigskip
\noindent Depending on the dimension of $Z$ the local datum $\mu
(x^3,Z)$ for $x^3$ at $Z$ takes the form:

\bigskip
\noindent If $Z$ is a point, then \begin{equation}\label{eq:15} \mu
(x^3,Z)=\epsilon_Z\cdot \frac {((a_Z+l)\cdot z)^3}{ n_{Z,1}\cdot
n_{Z,2}\cdot n_{Z,3}\cdot z^3}= \epsilon_Z\cdot \frac
{(a_Z+l)^3}{n_{Z,1}\cdot n_{Z,2}\cdot n_{Z,3}},\end{equation}
 where
$\epsilon_Z\in \{\pm 1\}$ is $+1$ if and only if the point $Z$ is positively
oriented.

\bigskip
\noindent If $Z$ is $2$-dimensional, then $$\mu (x^3,Z)=\left
[(x_{\vert Z}+(a_Z+l)\cdot z)^3\cdot \left((y_{Z,1} + n_{Z,1}\cdot
z)\cdot (y_{Z,2} + n_{Z,2}\cdot z)\right )^{-1}\right ]_Z=$$
\begin{equation}\label{eq:13} \frac 1 {n_{Z,1}\cdot
n_{Z,2}}\cdot \left (-\frac {(a_Z+l)^3}{n_{Z,1}}\cdot [y_{Z,1} ]_Z-
\frac {(a_Z+l)^3}{n_{Z,2}}\cdot [y_{Z,2}]_Z+3(a_Z+l)^2\cdot
[x_{\vert Z}]_Z\right ).\end{equation}

\bigskip
\noindent If $Z$ is $4$-dimensional, then
$$\mu (x^3,Z)=\left [(x_{\vert Z}+(a_Z+l)\cdot z)^3\cdot (y_{Z,1} + n_{Z,1}\cdot z)^{-1}\right ]_Z=$$
\begin{equation}\label{eq:14}\frac{3(a_Z+l)}{n_{Z,1}}\cdot [x_{\vert Z}^2]_Z-\frac
{3(a_Z+l)^2}{n_{Z,1}^2}[x_{\vert Z}\cdot y_{Z,1}]_Z+\frac
{(a_Z+l)^3}{n_{Z,1}^3}[y_{Z,1}^2]_Z.\end{equation}

Note that, if $a_Z+l=0$, then the local data in (\ref{eq:15}), (\ref{eq:13}), (\ref{eq:14})
vanish. Note also that the local datum in
(\ref{eq:14}) vanishes for any $l$ if
$b_2(Z)=0$.

\bigskip
\noindent Next we provide formulas for $p_1(M)\cdot x$. Note that the Pontrjagin classes lift canonically to $S^1$-equivariant Pontrjagin classes. Thus, we may compute $[p_1(M)\cdot x]_M$ using the localization formula.

\bigskip
\noindent Depending on the dimension of $Z$ the local datum $\mu
(p_1(M)\cdot x,Z)$ for $p_1(M)\cdot x$ at $Z$ takes the form:

\bigskip
\noindent If $Z$ is a point, then the local datum is equal to
$\epsilon_Z\cdot \frac{(a_Z+l)\cdot z\cdot
(n_{Z,1}^2+n_{Z,2}^2+n_{Z,3}^2)\cdot z^2}{ n_{Z,1}\cdot n_{Z,2}\cdot
n_{Z,3}\cdot z^3}$, i.e.
\begin{equation}\label{eq:16}\mu (p_1(M)\cdot
x,Z)=\epsilon_Z\cdot \frac {(a_Z+l)\cdot
(n_{Z,1}^2+n_{Z,2}^2+n_{Z,3}^2)}{n_{Z,1}\cdot n_{Z,2}\cdot
n_{Z,3}},\end{equation} where $\epsilon_Z\in \{\pm 1\}$ is $+1$ if and only if
$Z$ is positively oriented.

\bigskip
\noindent If $Z$ is $2$-dimensional, then the local datum is equal to
\begin{multline*}
\left [(x_{Z,1}^2+(y_{Z,1} + n_{Z,1} z)^2+(y_{Z,2} + n_{Z,2}z)^2)\cdot (x_{\vert Z}+(a_Z+l)z)\cdot\right.\\
 \left.\cdot ((y_{Z,1} + n_{Z,1}z)(y_{Z,2} + n_{Z,2} z) )^{-1}\right ]_Z
\end{multline*}
which gives
$$\mu (p_1(M)\cdot
x,Z)=-\frac {(a_Z+l)}{n_{Z,1}\cdot n_{Z,2}}(n_{Z,1}^2 + n_{Z,2}^2)(\frac 1 {n_{Z,1}}\cdot [y_{Z,1}]_Z + \frac 1 {n_{Z,2}}\cdot [y_{Z,2}]_Z)$$
\begin{equation}\label{eq:17}+ \frac {n_{Z,1}^2 + n_{Z,2}^2}{n_{Z,1}\cdot n_{Z,2}}\cdot [x_{\vert
Z}]_Z+2\frac {(a_Z+l)}{n_{Z,1}\cdot n_{Z,2}}(n_{Z,1}\cdot [y_{Z,1}]_Z + n_{Z,2}\cdot [y_{Z,2}]_Z).\end{equation}

\bigskip
\noindent If $Z$ is $4$-dimensional, then $\mu (p_1(M)\cdot
x,Z)$ is equal to
$$\left [(p_1(Z)+(y_{Z,1} + n_{Z,1}\cdot z)^2)
\cdot(x_{\vert Z}+(a_Z+l)\cdot z)\cdot (y_{Z,1} + n_{Z,1}\cdot
z)^{-1}\right ]_Z$$
\begin{equation}\label{eq:18}=[x_{\vert Z}\cdot
y_{Z,1}]_Z+\frac{a_Z+l}{n_{Z,1}}\cdot [p_1(Z)]_Z.\end{equation}

Note that, if $a_Z+l=0$, then the local datum at $Z$ vanishes if $Z$ is a point. Note also that the local datum at $Z$ vanishes for any $l$ if the
dimension of $Z$ is $4$ and $b_2(Z)=0$.

\subsection{Equivariant signatures}

In this section we recall the Lefschetz fixed point formula for the equivariant signature and provide some formulas used in the paper.

Let $M$ be an oriented closed manifold with smooth $S^1$-action and
let $sign_{S^1}(M)$ denote the $S^1$-equivariant signature. A priori
$sign_{S^1}(M)$ is an element of the representation ring $R(S^1)$
which we identify via the character with the ring of finite Laurent
polynomials $\Z[\lambda ,\lambda ^{-1}]$.

By the Lefschetz fixed point formula of Atiyah-Bott-Segal-Singer
(cf. \cite{AtSi68}) the $S^1$-equivariant signature can be computed
locally at the $S^1$-fixed point components. More precisely, for any
topological generator $\lambda \in S^1$
\begin{equation}\label{eq:19}sign_{S^1}(\lambda )=\sum _{Z\subset M^{S^1}} \mu _Z(\lambda ),\end{equation}
where the local datum $\mu _Z(\lambda )$ at a connected component $Z\subset M^{S^1}$ is given by
$$\mu _Z(\lambda ) =\left [ \prod _j x_{Z,j}\cdot \frac {1+e^{-x_{Z,j}}}
{1-e^{-x_{Z,j}}}\cdot \prod _i \frac {1+\lambda^{-n_{Z,i}}\cdot e^{-y_{Z,i}}}{1-\lambda^{-n_{Z,i}}\cdot e^{-y_{Z,i}}}\right ]_Z.$$
For example, if $M$ is $6$-dimensional and $Z$ is a point, then the local datum is given by
\begin{equation}\label{eq:20}\mu _Z(\lambda )=\epsilon _Z\cdot \prod _{i=1}^3 \frac {1+\lambda^{-n_{Z,i}}}{1-\lambda^{-n_{Z,i}}}.\end{equation}

If $Z$ is $2$-dimensional, then the local datum is given by
\begin{equation}\label{eq:21}\mu _Z(\lambda )=\left [ x_{Z,1}\cdot \frac {1+e^{-x_{Z,1}}}{1-e^{-x_{Z,1}}}\cdot \prod _{i=1}^2 \frac {1+\lambda^{-n_{Z,i}}\cdot e^{-y_{Z,i}}}{1-\lambda^{-n_{Z,i}}\cdot e^{-y_{Z,i}}}\right ]_Z\end{equation}
$$=4\cdot \left (\frac {1+\lambda ^{n_{Z,2}}}{1-\lambda ^{n_{Z,2}}}\cdot \frac {\lambda ^{n_{Z,1}}}{(1-\lambda ^{n_{Z,1}})^2}   \cdot [y_{Z,1}]_Z  + \frac {1+\lambda ^{n_{Z,1}}}{1-\lambda ^{n_{Z,1}}}\cdot \frac {\lambda ^{n_{Z,2}}}{(1-\lambda ^{n_{Z,2}})^2}   \cdot [y_{Z,2}]_Z \right ).$$

\bigskip
\noindent
By homotopy invariance the $S^1$-equivariant signature is rigid, i.e. constant in $\lambda$.
 Hence, the sum $\sum _{Z\subset M^{S^1}} \mu _Z(\lambda )$ does not depend on $\lambda $
  (this can be shown also by comparing both sides of (\ref{eq:19}) and observing that poles of the left hand side can only occur in $0,\infty$,
   whereas a pole of the right hand side must be on the unit circle).
    In particular, in view of equation (\ref{eq:20}) $S^1$ cannot act on $M$ with only one fixed
point.

Recall from the beginning of the appendix that all $n_{Z,i}$ are
positive. Taking the limit $\lambda \to \infty$ in the right hand
side of (A.7) one sees that
\begin{equation}\label{eq:22}sign (M)=\sum_{Z\subset M^{S^1}} sign (Z).\end{equation}

  \end{appendix}

\bibliography{comp}{}
\bibliographystyle{amsplain}
\end{document}